\newtheorem{theorem}{Theorem}[section]
\newtheorem{lemma}[theorem]{Lemma}
\newtheorem{proposition}[theorem]{Proposition}
\newtheorem{corollary}[theorem]{Corollary}
\theoremstyle{plain} \CJKtilde
\begin{document}

\title{\Large Total [1,2]-domination in graphs\thanks{This work was supported by National Natural Science Foundation of
China (No. 11001269, No. 11161046)}}
\author{ Xuezheng Lv\\
\small Department of Mathematics, Renmin University of China,\\
\vspace{3mm}\small  Beijing 100872, P.R. China\\
{Baoyindureng Wu \thanks{Corresponding
author.\newline{\hspace*{5mm}} E-mail address:
xzlv@ruc.edu.cn (X. Lv); wubaoyin@hotmail.com(B. Wu)} }\\
\small  College of Mathematics and System Sciences, Xinjiang
University \\ \small  Urumqi, Xinjiang 830046, P.R. China \\
}

\date{}

\maketitle

{\small \noindent{\bfseries Abstract}: A subset $S\subseteq V$ in a
graph $G=(V,E)$ is a total $[1,2]$-set if, for every vertex $v\in
V$, $1\leq |N(v)\cap S|\leq 2$. The minimum cardinality of a total
$[1,2]$-set of $G$ is called the total $[1,2]$-domination number,
denoted by $\gamma_{t[1,2]}(G)$.

We establish two sharp upper bounds on the total [1,2]-domination
number of a graph $G$ in terms of its order and minimum degree, and
characterize the corresponding extremal graphs achieving these
bounds. Moreover, we give some sufficient conditions for a graph
without total $[1,2]$-set and for a graph with the same total
$[1,2]$-domination number, $[1,2]$-domination number and domination
number.
\\{\bfseries Keywords}: total $[1,2]$-set; total $[1,2]$-domination number;
$[1,2]$-set}

\section{Introduction}
We consider undirected finite simple graphs only, and refer to
\cite{W} for undefined notations and terminologies. Let $G=(V(G),
E(G))$ be a graph of order $n=|V(G)|$ and size $m=|E(G)|$. For a
vertex $v\in V(G)$, {\it the neighborhood} $N_G(v)$ of $v$ is the set
of vertices adjacent to $v$ in $G$, and the {\it closed
neighborhood} $N_G[v]$ of $v$ is $N_G(v)\cup \{v\}$; the {\it
degree} $d_G(v)$ of $v$ is the number of edges incident with $v$ in
$G$. Since $G$ is simple, $d_G(v)=|N_G(v)|$. If there is no
confusion, we simply write $N(v)$, $N[v]$ and $d(v)$ instead of
$N_G(v)$, $N_G[v]$ and $d_G(v)$. The minimum degree and maximum degree of
a vertex in a graph $G$ are denoted by $\delta(G)$ and $\Delta(G)$,
respectively.  A vertex of degree 1 is called a {\it leaf} and the
vertex adjacent to a leaf is called a {\it support vertex}.

Let $S\subseteq V(G)$ and $v\in S$. The open and closed neighborhood
of $S$ are $N(S)=\cup_{v\in S}N(v)$ and $N[S]=\cup_{v\in S}N[v]$,
respectively. The {\it $S$-private neighborhood }of $v$, denoted by
$pri_G(v, S)$ or simply $pri(v,S)$, consists of all vertices in
$N_G[v]$ but not in $N_G[S\setminus \{v\}]$; that is, $pri(v,
S)=N_G[v]\setminus N_G[S\setminus \{v\}]$. Thus, if $u\in pri(v,
S)$, then $N[u]\cap S=\{v\}$. For $S'\subseteq S$, denote $pri(S',
S)=\bigcup_{u\in S'}pri(u, S)$. The subgraph obtained by deleting
all vertices in $S$ is denoted by $G-S$. The symbol $G[S]$ denotes
the subgraph of $G$ induced by $S$. So, $G-S$ is the subgraph of $G$ induced by
$V(G)\setminus S$.

Let $X$ and $Y$ be two sets of vertices of a graph $G$. We denote by
$E[X, Y]$ the set of edges of $G$ with one end in $X$ and the other
end in $Y$.

A set $S\subseteq V(G)$ is called a {\it dominating set} of a graph
$G$ if $N[S]=V(G)$, that is, $N(v)\cap S\neq \emptyset$ for every
vertex $v\in V(G)\setminus S$. The minimum cardinality of a
dominating set in a graph $G$ is called the {\it domination number} of $G$,
and is denoted by $\gamma(G)$. A dominating set $S$ of $G$ is called
a {\it total dominating set} if for every vertex $v\in V(G)$,
$N(v)\cap S\neq \emptyset$. The {\it total domination number} of
$G$, denoted by $\gamma_{t}(G)$, is the minimum cardinality of a
total dominating set of $G$. An independent set of $G$ is a set of
mutually independent vertices.

The notion of $[1,2]$-domination was investigated by Dejter
\cite{dej} and more recently, by Chellali et al. \cite{che}. A
dominating set $S$ of $G$ is called a {\it $[1,2]$-set} if $1\leq
|N(v)\cap S|\leq 2$ holds for every vertex $v\in V(G)\setminus S$,
that is, $S$ is a dominating set and every vertex in $V\setminus S$
is adjacent to no more than two vertices of $S$. The {\it
$[1,2]$-domination number} of $G$, denoted by $\gamma_{[1,2]}(G)$,
is the minimum cardinality of a $[1,2]$-set of $G$. Since the vertex
set $V(G)$ itself is a $[1,2]$-set of a graph $G$, any graph has a
$[1,2]$-set. There exists an infinite family of graphs $G$ whose
$[1, 2]$-domination number are equal to their orders \cite{che, wu}.
So, one of the fundamental problem on $[1, 2]$-domination of graphs
is that for which graphs $G$ of order $n$ is $\gamma_{[1,2]}(G)=n$ ?
The problem, whether there is a simple, polynomial test for deciding
if $\gamma_{[1,2]}(G)<n$, is still open. A number of open problems
in \cite{che} regarding [1,2]-domination were solved by Yang and Wu
\cite{wu}.

An analogue notion of [1,2]-domination for the total domination of a
graph was also introduced in \cite{che}. A total dominating set $S$
of $G$ is called a {\it total $[1,2]$-set} if for every vertex $v\in
V(G)$, $1\leq |N(v)\cap S|\leq 2$. The {\it total $[1,2]$-domination
number} of $G$, denoted by $\gamma_{t[1,2]}(G)$, is the minimum
cardinality of a total $[1,2]$-set of $G$. However, we will see that
there exist an infinite number of graphs with no total $[1,2]$-set,
even among trees. For convenience, if there is no total $[1,2]$-set
in a graph $G$, we denote $\gamma_{t[1,2]}(G)=+\infty$. By this
convention, it is trivial to see that  for any graph $G$,
$$\gamma(G)\leq \gamma_{t}(G)\leq \gamma_{t[1,2]}(G) \ and\
\gamma(G)\leq \gamma_{[1,2]}(G)\leq \gamma_{t[1,2]}(G).$$

We call $S\subseteq V(G)$ a {\it $\gamma(G)$-set}, if $S$ is a
dominating set with $|S|=\gamma(G)$. Similarly,
$\gamma_{[1,2]}(G)$-set, $\gamma_{t}(G)$-set,
$\gamma_{t[1,2]}(G)$-set can be defined. One of the remaining
problems in \cite{che} was stated as follows.

\vspace{3mm}\noindent{\bf  Question (\cite{che})}. What can you say
about total $[1,2]$-set, and the corresponding total
$[1,2]$-domination number $\gamma_{t[1,2]}(G)$ ?

\vspace{3mm} In this paper, we give two sharp upper bounds on the
total $[1,2]$-domination number of a graph in terms of its order and
an additional condition that the minimum degree at least 1 or at
least 2. Moreover, we give some sufficient conditions for a graph
without total $[1,2]$-set and for a graph with the same total
$[1,2]$-domination number, $[1,2]$-domination number and domination
number.

\section{ Upper bounds for $\gamma_{t[1,2]}(G)$}

As usual, the path, cycle, and complete graph of order $n$ are
denoted by $P_n$, $C_n$ and $K_n$, respectively. The $k$-corona
$G\circ P_k$ is the graph obtained from $G$ by attaching a pendant
path of length $k-1$ to each vertex $v\in V(G)$; the double
$k$-corona $G\circ 2P_k$ is the graph obtained from $G$ by attaching
two pendant paths of length $k$ to each vertex $v\in V(G)$, see
Fig. 1 for an illustration.
\vspace{5mm}
\setlength{\unitlength}{6mm}\newsavebox{\Mx} \savebox{\Mx}
{\begin{picture}(0,0) \put(0,0){\circle*{0.2}}
\put(1,0){\circle*{0.2}}\put(1,1){\circle*{0.2}}
\put(0,1){\circle*{0.2}}
\put(2,1){\circle*{0.2}}\put(2,0){\circle*{0.2}}

\qbezier{(0,0)(1,0)(2,0)} \qbezier{(0,0)(0,0.5)(0,1)}
\qbezier{(0,1)(1,1)(2,1)} \qbezier{(1,0)(1,0.5)(1,1)}
\qbezier{(0,1)(0.5,0.5)(1,0)} \put(0.8,-1){$G$}
\end{picture}}

\setlength{\unitlength}{6mm}\newsavebox{\My} \savebox{\My}
{\begin{picture}(0,0) \put(0,0){\circle*{0.2}}
\put(1,0){\circle*{0.2}}\put(1,1){\circle*{0.2}}
\put(0,1){\circle*{0.2}}
\put(2,1){\circle*{0.2}}\put(2,0){\circle*{0.2}}
\put(0,2){\circle*{0.2}} \put(0,3){\circle*{0.2}}
\put(0,-1){\circle*{0.2}}\put(0,-2){\circle*{0.2}}
\put(1,2){\circle*{0.2}} \put(1,3){\circle*{0.2}}
\put(1,-1){\circle*{0.2}}\put(1,-2){\circle*{0.2}}
\put(3,0){\circle*{0.2}} \put(4,0){\circle*{0.2}}
\put(3,1){\circle*{0.2}}\put(4,1){\circle*{0.2}}

\qbezier{(0,0)(1,0)(2,0)} \qbezier{(0,0)(0,0.5)(0,1)}
\qbezier{(0,1)(1,1)(2,1)} \qbezier{(1,0)(1,0.5)(1,1)}
\qbezier{(0,1)(0.5,0.5)(1,0)}\qbezier{(0,0)(0,-1)(0,-2)}
\qbezier{(0,1)(0,2)(0,3)} \qbezier{(1,0)(1,-1)(1,-2)}
\qbezier{(1,1)(1,2)(1,3)} \qbezier{(2,0)(3,0)(4,0)}
\qbezier{(2,1)(3,1)(4,1)}

\put(0.8,-3){$G\circ P_2$}
\end{picture}}

\setlength{\unitlength}{6mm}\newsavebox{\Mz} \savebox{\Mz}
{\begin{picture}(0,0) \put(0,0){\circle*{0.2}}
\put(1,0){\circle*{0.2}}\put(1,1){\circle*{0.2}}
\put(0,1){\circle*{0.2}}
\put(2,1){\circle*{0.2}}\put(2,0){\circle*{0.2}}
\put(0,2){\circle*{0.2}} \put(0,3){\circle*{0.2}}
\put(0,-1){\circle*{0.2}}\put(0,-2){\circle*{0.2}}
\put(1,2){\circle*{0.2}} \put(1,3){\circle*{0.2}}
\put(1,-1){\circle*{0.2}}\put(1,-2){\circle*{0.2}}
\put(3,0){\circle*{0.2}} \put(4,0){\circle*{0.2}}
\put(3,1){\circle*{0.2}}\put(4,1){\circle*{0.2}}

\put(-1.2,2.8){\circle*{0.2}} \put(-0.6,1.8){\circle*{0.2}}
\put(-1.2,-1.8){\circle*{0.2}}\put(-0.6,-0.8){\circle*{0.2}}
\put(1.6,1.8){\circle*{0.2}} \put(2.2,2.8){\circle*{0.2}}
\put(1.6,-0.8){\circle*{0.2}}\put(2.2,-1.8){\circle*{0.2}}
\put(2.8,-0.6){\circle*{0.2}} \put(3.8,-1.2){\circle*{0.2}}
\put(2.8,1.6){\circle*{0.2}}\put(3.8,2.2){\circle*{0.2}}

\qbezier{(0,0)(1,0)(2,0)} \qbezier{(0,0)(0,0.5)(0,1)}
\qbezier{(0,1)(1,1)(2,1)} \qbezier{(1,0)(1,0.5)(1,1)}
\qbezier{(0,1)(0.5,0.5)(1,0)}\qbezier{(0,0)(0,-1)(0,-2)}
\qbezier{(0,1)(0,2)(0,3)} \qbezier{(1,0)(1,-1)(1,-2)}
\qbezier{(1,1)(1,2)(1,3)} \qbezier{(2,0)(3,0)(4,0)}
\qbezier{(2,1)(3,1)(4,1)}

\qbezier{(0,0)(-0.6,-0.8)(-1.2,-1.8)}
\qbezier{(0,1)(-0.6,1.8)(-1.2,2.8)}
\qbezier{(1,1)(1.6,1.8)(2.2,2.8)}
\qbezier{(1,0)(1.6,-0.8)(2.2,-1.8)}
\qbezier{(2,1)(2.8,1.6)(3.8,2.2)}
\qbezier{(2,0)(2.8,-0.6)(3.8,-1.2)}

\put(0.8,-3){$G\circ 2 P_2$}
\end{picture}}

\setlength{\unitlength}{10mm}
\begin{picture}(4.5,4)
\put(-1,1){\makebox(4.5,3)[b]{\usebox{\Mx}}}
\put(2.5,2){\makebox(4.5,3)[b]{\usebox{\My}}}
\put(7,2){\makebox(4.5,3)[b]{\usebox{\Mz}}}
\put(4,-0.5){{\scriptsize Fig. 1:\  $2$-corona and double $2
$-corona }}
\end{picture}
\vskip 0.5cm

\vspace{3mm} We start with the following lemma.

\begin{lemma} Let $n\geq 3$ and $n_i\geq 1$ be integers for $1\leq
i\leq 2$. Then

(1) $$\gamma_{t[1,2]}(K_n)=\gamma_{t[1,2]}(K_{n_1,n_2})=2;$$

(2) $$\gamma_{t[1,2]}(P_n)=\gamma_{t[1,2]}(C_n)=\left \{
\begin{array}{ll}
\frac{n+1}{2}, & \mbox{$n\equiv 1\ (mod\ 2)$} \\
\frac{n}{2}+1, & \mbox{$n\equiv 2\ (mod\ 4)$} \\
\frac{n}{2}, & \mbox{$n\equiv 0\ (mod\ 4)$}.
\end{array}
\right.
$$

(3) if $G=H\circ 2P_2$ for a connected graph $H$ of order at least two, then
$\gamma_{t[1,2]}(G)=\frac {4n} 5$, where $n$ is the order of $G$.
\end{lemma}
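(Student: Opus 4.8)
The plan is to establish matching upper and lower bounds, both equal to $4k$, where $k=|V(H)|\ge 2$. First I would fix notation adapted to the construction: for each $v\in V(H)$ write $v\,a_v\,b_v$ and $v\,c_v\,d_v$ for the two pendant paths attached to $v$ in $G=H\circ 2P_2$, so that $b_v,d_v$ are leaves of $G$ with support vertices $a_v,c_v$. Then $V(G)$ is the disjoint union of $V(H)$ with the $4k$ vertices $\bigcup_{v\in V(H)}\{a_v,b_v,c_v,d_v\}$; hence $n=5k$ and the claimed value $\tfrac{4n}{5}$ is exactly $4k$.

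For the upper bound I would simply verify that $S_0=\bigcup_{v\in V(H)}\{a_v,b_v,c_v,d_v\}$ is a total $[1,2]$-set. The checks are routine: each leaf $b_v$ (resp.\ $d_v$) has exactly the single neighbour $a_v\in S_0$ (resp.\ $c_v\in S_0$); each $a_v$ has the single $S_0$-neighbour $b_v$, each $c_v$ the single $S_0$-neighbour $d_v$; and each $v\in V(H)$ has $N(v)\cap S_0=\{a_v,c_v\}$, of size $2$, because $S_0$ meets $V(H)$ in nothing. So $1\le |N(x)\cap S_0|\le 2$ for every $x\in V(G)$, giving $\gamma_{t[1,2]}(G)\le |S_0|=4k$.

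For the lower bound, let $S$ be any total $[1,2]$-set of $G$. Since $b_v$ and $d_v$ are leaves, totality of $S$ forces $a_v,c_v\in S$ for every $v\in V(H)$. Then for each $v\in V(H)$ we already have $\{a_v,c_v\}\subseteq N(v)\cap S$, so the upper half of the $[1,2]$-condition forces $N(v)\cap S=\{a_v,c_v\}$ and in particular $N_H(v)\cap S=\emptyset$ for every $v\in V(H)$. This is the step where the hypotheses on $H$ are used: since $H$ is connected of order $\ge 2$, every $u\in V(H)$ has some neighbour $w\in V(H)$, and $N_H(w)\cap S=\emptyset$ gives $u\notin S$; hence $S\cap V(H)=\emptyset$. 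Now $a_v\in S$ needs a neighbour in $S$, and its only neighbours are $v\notin S$ and $b_v$, so $b_v\in S$; symmetrically $d_v\in S$. Thus $\{a_v,b_v,c_v,d_v\}\subseteq S$ for every $v$, and these $4k$ vertices are distinct, so $|S|\ge 4k$. Combining the bounds yields $\gamma_{t[1,2]}(G)=4k=\tfrac{4n}{5}$.

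The argument is essentially forced and the only delicate point is the \emph{propagation} in the lower bound: once the support vertices $a_v,c_v$ are pinned into $S$, the degree-$2$ ceiling at every $v\in V(H)$ forbids any edge of $H$ from carrying an endpoint of $S$, which empties $S\cap V(H)$ and thereby drags all the remaining leaves into $S$. I would also include the observation that the hypothesis $|V(H)|\ge 2$ is genuinely needed: for $H=K_1$ one checks that $\{a_v,c_v,v\}$ is already a total $[1,2]$-set of $K_1\circ 2P_2$, so there $\gamma_{t[1,2]}=3<4=\tfrac{4n}{5}$, which is precisely the configuration that connectedness of order $\ge2$ rules out.
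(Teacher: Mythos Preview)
Your proof of part (3) is correct and follows essentially the same route as the paper: exhibit $S_0=\bigcup_v\{a_v,b_v,c_v,d_v\}$ for the upper bound, then for any total $[1,2]$-set $S$ force the support vertices $a_v,c_v$ into $S$, use the ceiling $|N(v)\cap S|\le 2$ to empty $S\cap V(H)$, and finally pull the leaves in via $\delta(G[S])\ge 1$. The paper leaves (1) and (2) as exercises, so your treatment of (3) matches the paper's own scope; your added remark on the necessity of $|V(H)|\ge 2$ is a welcome clarification not present in the original.
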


\begin{proof} We leave (1) and (2) to the readers for an exercise.
To show (3), let $H$ be a connected graph of order $k\geq 2$, with
$V(H)=\{u_1, \ldots, u_k\}$. Let $V(G)=V(H)\cup X\cup Y\cup X'\cup
Y'$, where $X=\{x_1, \ldots, x_k\}$, $Y=\{y_1, \ldots, y_k\}$,
$X'=\{x_1', \ldots, x_k'\}$, $Y'=\{y_1', \ldots, y_k'\}$, and
$E(G)=E(H)\cup \{u_ix_i|\ 1\leq i\leq k\}\cup \{u_iy_i|\ 1\leq i\leq
k\}\cup \{x_ix_i'|\ 1\leq i\leq k\} \cup \{y_iy_i'|\ 1\leq i\leq
k\}$. Obviously, $X\cup Y\cup X'\cup Y'$ is a total $[1,2]$-set of $G$, so $\gamma_{t[1,2]}(G)\leq \frac{4n}{5}$. Take a $\gamma_{t[1,2]}$-set $S$ of $G$. Since $S$ is a total
dominating set of $G$, $X\cup Y\subseteq S$. On the other hand,
$V(H)\cap S=\emptyset$. If it is not true, without loss of
generality, we may assume that $u_1\in S$ and
$u_2$ is a neighbor of $u_1$ in $H$. But then, $\{x_2, y_2, u_1\}\subseteq N_G(u_2)\cap S$,
contradicting that $S$ is a total $[1,2]$-set of $G$. Combining the
above two facts with the other two facts that $X\cup Y$ is an
independent set of $G$ and $\delta(G[S])\geq 1$, it follows that
$S=X\cup Y\cup X'\cup Y'$, and thus
$\gamma_{t[1,2]}(G)=\frac{4n}{5}$.

\end{proof}

By Lemma 2.1, $\gamma_{t[1,2]}(G)\geq 2$ for any connected
graph $G$ of order $n\geq 2$, and there are infinite family of
graphs $G$ with $\gamma_{t[1,2]}(G)=2$. Indeed, for a graph $G$, $\gamma_{t[1,2]}(G)=2$
if and only if $G$ has a dominating set, which consists of a pair of adjacent vertices in $G$.
Cockayne et al. \cite{cock}
proved that $\gamma_t(G)\leq \frac{2n}{3}$ for a connected graph of
order $n\geq 3$. Later, Brigham et al. \cite{bri} characterized
those graphs achieving this bound.

\begin{theorem}(\cite{bri,cock})
Let $G$ be a connected graph of order $n\geq 3$. Then
$\gamma_t(G)\leq \frac{2n}{3}$ and the equality holds if and only if
$G$ is $C_3$, $C_6$ or $H\circ P_2$ for some connected graph $H$.
\end{theorem}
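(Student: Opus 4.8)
The plan is to treat the inequality and the characterization separately, with essentially all the effort going into the latter.

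\emph{The inequality.} I would first reduce to trees: if $T$ is a connected spanning subgraph of $G$ (for instance a spanning tree), then every total dominating set of $T$ is one of $G$, so $\gamma_t(G)\le\gamma_t(T)$, and it suffices to prove $\gamma_t(T)\le\frac{2n}{3}$ for a tree on $n\ge 3$ vertices. Induct on $n$, the orders $n\le 5$ being checked by hand. If $\operatorname{diam}(T)\le 3$ then $T$ is a star or a double star and some edge dominates $V(T)$, so $\gamma_t(T)=2\le\frac{2n}{3}$. If $\operatorname{diam}(T)\ge 4$, fix a longest path $v_0v_1\cdots v_d$ and root $T$ at $v_d$, so that everything below $v_2$ lies within distance $2$ of $v_2$. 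In the generic situation ($v_2$ has the single child $v_1$ and $v_1$ has the single leaf child $v_0$) delete $\{v_0,v_1,v_2\}$ to obtain a connected tree $T'$ on $n-3\ge 3$ vertices with $v_3$ still non-isolated; a $\gamma_t(T')$-set together with $\{v_1,v_2\}$ is a total dominating set of $T$, so $\gamma_t(T)\le\gamma_t(T')+2\le\frac{2n}{3}$ by induction. The remaining local shapes at $v_2$ (extra leaf children of $v_1$, extra support-children of $v_2$) are handled by minor variants: one always peels off a chunk of $k\ge 3$ vertices while adding at most $\frac{2}{3}k$ to the dominating set. That the extremal families attain the bound is a short computation: for $H\circ P_2$ every total dominating set must use at least two of the three vertices of the triple at each vertex of $H$, whence $\gamma_t(H\circ P_2)\ge\frac{2n}{3}$, with equality realized by the vertices of $H$ together with the inner vertex of each attached pendant path; and $\gamma_t(C_3)=2$, $\gamma_t(C_6)=4$ by inspection.

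\emph{The characterization.} Assume $\gamma_t(G)=\frac{2n}{3}$ (so $3\mid n$), fix a $\gamma_t(G)$-set $S$, and put $D=V(G)\setminus S$, so that $|S|=2|D|$. Two facts are immediate: $G[S]$ has no isolated vertex and $D\subseteq N(S)$ (since $S$ is total dominating), and by minimality each $v\in S$ is ``justified'' — $S\setminus\{v\}$ fails to be a total dominating set because of some vertex whose only neighbour in $S$ is $v$. The heart of the proof is a counting/charging argument that exploits how tight the identity $|S|=2|D|$ is: charging each component of $G[S]$ against the vertices of $D$ it dominates and against the justification it must supply should force $G$ to decompose as a connected ``core'' $H$ carrying, at each of its vertices, a pendant path of exactly two vertices — that is, $G=H\circ P_2$ — the only exceptions, arising precisely when no such pendant path can be peeled off, being $C_3$ and $C_6$. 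Once the structure is pinned down, the outer vertex of each pendant pair sits in $D$ as a leaf-type private neighbour and the inner vertex sits in $S$, and deleting all these pairs returns the connected graph $H$.

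The routine part is the inequality; the genuine difficulty — and the expected main obstacle — is the equality analysis, compounded by the fact that a minimum total dominating set need not have the ``canonical'' appearance. Thus the hard step is to recover the pendant-$P_2$ structure from an \emph{arbitrary} extremal $S$: controlling how the one or two cross edges at each vertex of $D$ interlock with the internal edges of $G[S]$, and ruling out the near-misses — longer pendant paths, or cycles other than $C_3$ and $C_6$ glued onto a core — by exhibiting in each case a total dominating set of size strictly below $\frac{2n}{3}$.
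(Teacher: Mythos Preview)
The paper does not prove this theorem at all: it is quoted as a known result, with the inequality attributed to Cockayne, Dawes and Hedetniemi and the characterization to Brigham, Carrington and Vitray, and is used only as motivation before the paper's own Theorem~2.3 on total $[1,2]$-domination. So there is no ``paper's proof'' to compare against.

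On the merits of your proposal itself: the inequality half is fine in outline --- the spanning-tree reduction and the longest-path induction are standard and work, though your treatment of the ``remaining local shapes at $v_2$'' is a promise rather than an argument and needs the explicit case split (e.g.\ $v_1$ has $\ge 2$ leaf children, or $v_2$ has another support child, or $v_2$ has a leaf child). The characterization half, however, is not a proof but a description of what a proof would have to accomplish. You correctly identify the obstacle --- an arbitrary minimum total dominating set need not be the canonical one --- but the phrases ``counting/charging argument'' and ``should force $G$ to decompose'' are placeholders; you never define the charge, never state the inequality it yields, and never carry out the case analysis that would exclude the near-misses. In particular, the spanning-tree reduction you used for the inequality is useless for the equality case (equality in $G$ does not propagate to equality in a spanning tree), so you must work directly in $G$; the actual argument in Brigham--Carrington--Vitray is substantially more intricate than your sketch suggests.
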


Note that $\gamma_{[1,2]}(G)\leq n$ is a trivial upper bound for a
graph $G$ of order $n$, and the equality holds for an infinite many
values of $n$, see \cite{che, wu}. In the following theorem, we
establish a sharp upper bound for the total $[1,2]$-domination
number of a connected graph in terms of its order and characterize
all graphs achieving the bound.

\begin{theorem}{\label{21}}
Let $G$ be a connected graph of order $n\geq 5$. If
$\gamma_{t[1,2]}(G)<+\infty$, then $$\gamma_{t[1,2]}(G)\leq
\frac{4n}{5},$$ with equality if and only if $G=H\circ 2P_2$ for
some connected graph $H$ of order at least two.
\end{theorem}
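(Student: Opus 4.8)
The plan is to prove the inequality first and then analyze the equality case. For the upper bound, the natural strategy is to build a total $[1,2]$-set greedily from a spanning structure of $G$. Assume $\gamma_{t[1,2]}(G)<+\infty$, so $G$ has some total $[1,2]$-set $S_0$; I would start from $S_0$ and try to ``shrink'' it, but a cleaner route is probably the following: take a spanning tree $T$ of $G$ (or argue directly on $G$), root it, and process vertices from the leaves upward, adding vertices to the candidate set $S$ in small blocks so that each block of five vertices of $G$ contributes at most four vertices to $S$, while never creating a vertex with three neighbors in $S$. Since we are promised that \emph{some} total $[1,2]$-set exists, whenever a greedy addition would violate the upper bound $|N(v)\cap S|\le 2$, we can locally repair using the structure of the existing total $[1,2]$-set. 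The cleanest packaging is an averaging/discharging argument: partition $V(G)$ (using the rooted tree) into parts of size $\ge 5$, or show that among every five consecutive vertices along pendant paths at most four are forced into $S$, and handle the at most four ``leftover'' vertices near the root by the condition $n\ge 5$.

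For the equality characterization, the ``if'' direction is already done in Lemma~2.1(3). For the ``only if'' direction, suppose $\gamma_{t[1,2]}(G)=\frac{4n}{5}$. I would argue that $G$ must have many leaves and support vertices: if $\delta(G)\ge 2$ or more generally if $G$ has few pendant paths, one can find a total $[1,2]$-set strictly smaller than $\frac{4n}{5}$ (comparing with the total-domination bound $\gamma_t(G)\le \frac{2n}{3}<\frac{4n}{5}$ from Theorem~2.2 whenever a $\gamma_t$-set happens to already be a total $[1,2]$-set, and otherwise repairing it). The key claim is that equality forces every vertex of $G$ to lie on one of the pendant $P_2$'s attached to a ``core'' $H$, with exactly two such pendant paths at each core vertex — that is, $G=H\circ 2P_2$. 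To get this, I would take a $\gamma_{t[1,2]}$-set $S$, note as in Lemma~2.1 that all the ``inner'' pendant vertices (the ones adjacent to a support vertex) must be in $S$ for total domination, count how much slack is available, and show that any deviation from the $H\circ 2P_2$ shape (a vertex with degree $\ge 3$ in the ``core'', a pendant path of length $\ne 2$, only one pendant path at a core vertex, or a core vertex with three or more pendant paths) yields a cheaper total $[1,2]$-set, contradicting $|S|=\frac{4n}{5}$.

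Concretely, for the ``only if'' part I would set $L$ = set of leaves, and for each leaf let its support vertex be determined; group $V(G)$ by support vertices. A total $[1,2]$-set must contain every support vertex that has $\ge 2$ leaves and, by the ``$\le 2$'' constraint, a support vertex cannot have $\ge 3$ leaves at all (three leaves would each need the support vertex in $S$, but then... actually they are fine; rather, a support vertex with $\ge 3$ leaves forces all leaves in or handled, and one checks the count). The decisive structural points are: (i) no vertex of $G$ has three pendant paths of length $2$ and simultaneously lies in the core with a core-neighbor, since that reproduces the contradiction in Lemma~2.1; (ii) pendant paths longer than $2$ or equal to $1$ create local savings. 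I would make these precise by a short case analysis on the ``pendant path spectrum'' at each non-leaf, non-support vertex.

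The main obstacle I anticipate is the equality characterization, specifically ruling out all the near-miss configurations in a uniform way — pendant paths of lengths $1,3,4,\dots$, vertices with exactly one or with three-or-more attached pendant paths, and the interaction with the requirement $\gamma_{t[1,2]}(G)<+\infty$ (which itself restricts $G$: e.g.\ a vertex with many leaves can break the upper constraint). Handling the ``finiteness'' hypothesis correctly while doing the extremal analysis is the delicate bookkeeping; the inequality $\gamma_{t[1,2]}(G)\le\frac{4n}{5}$ itself should follow from a fairly routine (if slightly tedious) greedy-plus-discharging argument once the local replacement rules are set up.
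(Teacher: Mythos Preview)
Your plan has a genuine gap: the direction of the argument is inverted, and the constructive route you sketch does not obviously terminate with the claimed bound. You propose to \emph{build} a small total $[1,2]$-set greedily (via a spanning tree, discharging, ``blocks of five''), repairing collisions with the $|N(v)\cap S|\le 2$ constraint by borrowing from a known total $[1,2]$-set $S_0$. But the ``$\le 2$'' constraint is global --- adding one vertex to $S$ can break the constraint at \emph{every} neighbor of that vertex, inside or outside $S$ --- so ``local repair'' is not local at all, and nothing in your outline explains why the repairs do not cascade or why each group of five vertices really contributes at most four to the final set. The appeal to $\gamma_t(G)\le \tfrac{2n}{3}$ from Theorem~2.2 is also not usable as stated: a minimum total dominating set need not satisfy the upper constraint $|N(v)\cap S|\le 2$, and ``repairing it'' is exactly the hard part.

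The paper's proof runs in the opposite direction and exploits a structural fact you never invoke: if $S$ is a $\gamma_{t[1,2]}(G)$-set, then $1\le \delta(G[S])\le \Delta(G[S])\le 2$, so every component of $G[S]$ is a path or a cycle. One then partitions $S$ according to the isomorphism type of its component (cycle, $K_2$, $P_3$, longer path) and, by minimality of $S$, lower-bounds the number of \emph{private neighbors} in $V(G)\setminus S$ contributed by each part; a short edge-count handles the $K_2$ components. Summing gives $n\ge \tfrac{5}{4}|S|$, hence $|S|\le \tfrac{4n}{5}$. Equality then forces all components of $G[S]$ to be $K_2$'s with a single outside neighbor each and every outside vertex adjacent to exactly two such $K_2$'s, which is precisely the $H\circ 2P_2$ structure. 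None of your pendant-path case analysis is needed once you argue from $G[S]$ rather than from the pendant structure of $G$ itself.
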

\begin{proof} Let $S$ be a $\gamma_{t[1,2]}(G)$-set of $G$. Since $1\leq
\delta(G[S])\leq \Delta(G[S])\leq 2$, each component of $G[S]$ is a
path or a cycle. Divide $S$ into four subsets:
$$S_1=\{u\in S|\ u\ \text{lies  in a component of} \ G[S]\ \text{isomorphic to a cycle}\},$$
$$S_2=\{u\in S|\ u\ \text{ lies in  a component of} \ G[S]\ \text{isomorphic to}\ K_2\},$$
$$S_3=\{u\in S|\ u\ \text{lies in a component of } \ G[S]\ \text{isomorphic to}\ P_3\},$$
$$S_4=\{u\in S|\ u\ \text{lies in a component of} \ G[S]\ \text{isomorphic to a path of order at least 4}\}.$$
Clearly, $|S|=\sum_{i=1}^{4}|S_i|$. Recall that for a vertex $u\in
S$, $pri(u, S)=\{v\in V(G)\setminus S\ |\  N(v)\cap S=\{u\}\}$ and
$pri(S_i, S)=\cup_{u\in S_i}pri(u, S)$.

\vspace{3mm}\noindent{\bf Claim 1.} (1) $|pri(S_1, S)|\geq |S_1|$, (2)
$|pri(S_3, S)|\geq \frac{2|S_3|}{3}$, (3) $|pri(S_4, S)|\geq
|S_4|-2\omega_4$, where $\omega_4$ denotes the number of components
in $G[S]$, which is isomorphic to a path of order at least 4.

\vspace{3mm}\noindent{\bf Proof of Claim 1.} Let $u\in S$ be a vertex. If $u\in
S_1$ and $pri(u)=\emptyset$, then $S\setminus \{u\}$ will be a total
$[1,2]$-set of $G$, contradicting the choice of $S$. Therefore
$$|pri(S_1, S)|=|\bigcup_{u\in S_1} pri(u, S)|=\sum_{u\in S_1}|pri(u, S)|\geq |S_1|.$$ This proves (1).

Now assume that $u\in S_3$ and $d_{G[S_3]}(u)=1$. If
$pri(u)=\emptyset$, $S\setminus \{u\}$ is still a total $[1,2]$-set
of $G$, contradicting the choice of $S$. Therefore
$$|pri(S_3, S)|=|\bigcup_{u\in S_3} pri(u, S)|\geq \sum_{u\in S_3,d_{G[S_3]}(u)=1}|pri(u, S)|\geq \frac {2|S_3|}{3}.$$
This proves (2).

To show (3), let $P=u_1u_2\cdots u_k$ be a component of $G[S]$,
which is isomorphic to a path of order $k$, where $k\geq 4$. By an argument
similar to the above, we have $pri(u_i, S)\neq \emptyset$ for any
$i\in\{1, 2, \ldots, k\}\setminus \{2, k-1\}$, and thus $|pri(S_4,
S)|\geq |S_4|-2\omega_4$. \hfill\qed

\vspace{3mm}\noindent{\bf Claim 2.} $|U|\geq \frac{|S_2|}{4}$ where
$U=V(G)\setminus [S\cup pri(S_1, S)\cup pri(S_3, S)\cup pri(S_4,
S)]$.

\vspace{3mm}\noindent{\bf Proof of Claim 2.} Since $G$ is connected,
every component of $G[S]$, which is isomorphic to $K_2$, has at least
one neighbor in $U$. Let $W=\cup_{u\in S_2}N(u)\cap U$. Since every
vertex of $W$ is adjacent to at most two vertices of $S$ in $G$,
$$2|W|\geq |E[S,W]|\geq |E[S_2,W]|\geq \frac {|S_2|} 2,$$
and thus $|U|\geq |W|\geq \frac{|S_2|}{4}$. \hfill\qed

Since
$\omega_4\leq \frac{|S_4|}{4}$, we have
\begin{eqnarray*}
n&=& |S|+|pri(S_1, S)|+|pri(S_3, S)|+|pri(S_4, S)|+|U|\\
&\geq & |S|+|S_1|+\frac{2|S_3|}{3}+|S_4|-2\omega_4+\frac{|S_2|}{4}\\
&\geq & \frac{5|S|}{4}+\frac{3|S_1|}{4}+\frac{5|S_3|}{12}+\frac{3|S_4|}{4}-\frac{|S_4|}{2}\\
&\geq & \frac{5|S|}{4}+\frac{3|S_1|}{4}+\frac{5|S_3|}{12}+\frac{|S_4|}{4}\\
&\geq & \frac{5|S|}{4}.
\end{eqnarray*}
So, $\gamma_{t[1,2]}(G)\leq  \frac{4n}{5}$.

If the equality holds, we have $|S_1|=|S_3|=|S_4|=0$,
$|V(G)\setminus S|=\frac{|S|}{4}$, every component isomorphic to
$K_2$ has exactly one neighbor in $V(G)\setminus S$, and each vertex
of $V(G)\setminus S$ has exactly two neighbors in $S$. And if $\frac{|S|}{4}=1$, then $G=P_5$, while $\gamma_{t[1,2]}(P_5)=3$. So, $G=H\circ
2P_2$ for a connected graph $H$ of order at least 2. By (3) of Lemma
2.1, the converse is also true.
\end{proof}

Theorem 2.3 tells us that there does not exist a graph $G$ of order
$n\geq 5$ with $\gamma_{t[1,2]}(G)=k$ for any $\lfloor\frac{4n}{5}\rfloor <k\leq n$.
However, the situation become different when $k\leq \lfloor \frac {4n} 5\rfloor$.
To see this let us construct a class of the following graphs. We start from a complete graph $K_{n-k}$ with $V(K_{n-k})=\{v_1,v_2,\cdots,v_{n-k}\}$ and denote $r=\lfloor \frac{k}{4}\rfloor$.
For $10\leq k \leq \lfloor\frac{4n}{5}\rfloor$ and $k\equiv\ 0\ (mod\  2)$, we construct a graph $F_{n,k}$ of order
$n$ with $\gamma_{t[1,2]}(F_{n,k})=k$ as follows:
$F_{n,k}$ has $$V(F_{n,k})=V(H)\cup \{w_1, \ldots, w_{\frac k
2}\}\cup\{w_1', \ldots, w_{\frac k 2}'\}$$ and
$E(F_{n,k})=E(H)\cup\{w_iw_i'|\ 1\leq i\leq \frac{k}{2}\}\cup E'$,
where
$$E'=\{v_iw_{2i-1}, v_iw_{2i}|\ 1\leq i\leq r-1\}\cup \{v_rw_{2r-1}\}\cup
\{v_jw_{2r}, v_jw_{2r}'|\ r+1\leq j\leq n-k \} $$ if $k\equiv 0\ (mod\ 4)$; and
$$E'=\{v_iw_{2i-1},v_iw_{2i}|\ 1\leq i\leq r\}\cup \{v_jw_{2r+1}
v_jw_{2r+1}'|\ r+1\leq j\leq n-k\}$$ if $k\equiv 2\ (mod \ 4)$.
For $10\leq k < \lfloor\frac{4n}{5}\rfloor$ and $k\equiv\ 1\ (mod\  2)$, we construct a graph $F_{n,k}$ of order
$n$ with $\gamma_{t[1,2]}(F_{n,k})=k$ as follows. Note that now $n-k-r\geq 2$. $F_{n,k}$ has $$V(F_{n,k})=V(H)\cup
\{w_1, \ldots, w_{\frac {k-3} 2}\}\cup\{w_1', \ldots, w_{\frac {k-3}
2}'\}\cup \{w, w', w''\}$$ and
$E(F_{n,k})=E(H)\cup\{w_iw_i'|\ 1\leq
i\leq \frac{k-3}{2}\}\cup\{ww', w'w''\}\cup E'$,
where
\begin{eqnarray*}
E'&=&\{v_iw_{2i-1},v_iw_{2i}|\ 1\leq i\leq r-1\}\cup \{v_rw_{2r-1},v_{r+1}w, v_{r+2}w''\}\\
&& \cup \{v_jw'|\ r+3\leq j\leq n-k \}
\end{eqnarray*} if $k\equiv\ 1\ (mod \ 4)$; and
\begin{eqnarray*}
E'&=& \{v_iw_{2i-1},v_iw_{2i}|\ 1\leq i\leq r\}\cup \{v_{r+1}w, v_{r+2}w''\}\\
&& \cup \{v_jw'|\
r+3\leq j\leq n-k \}
\end{eqnarray*}
if $k\equiv\ 3\ (mod\ 4)$.

In Fig. 2 we display $F_{14,10}$, $F_{15,11}$, $F_{16,12}$ and
$F_{18,13}$.

\setlength{\unitlength}{6mm}\newsavebox{\Mc} \savebox{\Mc}
{\begin{picture}(0,0) \put(0,0){\circle*{0.2}}
\put(2,0){\circle*{0.2}}\put(2,2){\circle*{0.2}}
\put(0,2){\circle*{0.2}}
\put(0,4){\circle*{0.2}}\put(2,4){\circle*{0.2}}
\put(-0.5,-1){\circle*{0.2}}\put(0.5,-1){\circle*{0.2}}
\put(1.5,-1){\circle*{0.2}}\put(2.5,-1){\circle*{0.2}}
\put(-0.75,-2){\circle*{0.2}}\put(0.75,-2){\circle*{0.2}}
\put(1.25,-2){\circle*{0.2}}\put(2.75,-2){\circle*{0.2}}

\qbezier{(0,0)(1,0)(2,0)} \qbezier{(0,0)(0,0.5)(0,4)}
\qbezier{(0,0)(1,1)(2,2)} \qbezier{(0,2)(1,2)(2,2)}
\qbezier{(0,2)(1,1)(2,0)} \qbezier{(2,0)(2,1)(2,4)}
\qbezier{(0,2)(1,3)(2,4)} \qbezier{(0,4)(1,3)(2,2)}
\qbezier{(0,4)(1,4)(2,4)}\qbezier{(0,0)(-0.25,-0.5)(-0.5,-1)}\qbezier{(0,0)(0.25,-0.5)(0.5,-1)}
\qbezier{(2,0)(1.75,-0.5)(1.5,-1)}
\qbezier{(2,0)(2.25,-0.5)(2.5,-1)}\qbezier{(-0.5,-1)(-0.62,-1.5)(-0.75,-2)}
\qbezier{(0.5,-1)(0.62,-1.5)(0.75,-2)}
\qbezier{(1.5,-1)(1.32,-1.5)(1.25,-2)}\qbezier{(2.5,-1)(2.62,-1.5)(2.75,-2)}

\put(-0.7,0){$v_1$}\put(2.1,0){$v_2$}\put(2.1,2){$v_3$}\put(-0.7,2){$v_4$}
\put(-1.3,-1){$w_1$}\put(0.5,-0.9){$w_2$}\put(1.6,-1.2){$w_3$}\put(2.6,-1){$w_4$}
\put(-1.5,-2){$w_1'$}\put(-0.1,-2){$w_2'$}\put(1.4,-2){$w_3'$}\put(2.8,-2){$w_4'$}
\put(-0.8,4){$w_5$}\put(2.1,4){$w_5'$}
\put(0.4,-2.9){ $F_{14,10}$}

\end{picture}}

\setlength{\unitlength}{6mm}\newsavebox{\Md} \savebox{\Md}
{\begin{picture}(0,0) \put(0,0){\circle*{0.2}}
\put(2,0){\circle*{0.2}}\put(2,2){\circle*{0.2}}
\put(0,2){\circle*{0.2}}
\put(0,4){\circle*{0.2}}\put(2,4){\circle*{0.2}}\put(1,4){\circle*{0.2}}
\put(-0.5,-1){\circle*{0.2}}\put(0.5,-1){\circle*{0.2}}
\put(1.5,-1){\circle*{0.2}}\put(2.5,-1){\circle*{0.2}}
\put(-0.75,-2){\circle*{0.2}}\put(0.75,-2){\circle*{0.2}}
\put(1.25,-2){\circle*{0.2}}\put(2.75,-2){\circle*{0.2}}

\qbezier{(0,0)(1,0)(2,0)} \qbezier{(0,0)(0,0.5)(0,4)}
\qbezier{(0,0)(1,1)(2,2)} \qbezier{(0,2)(1,2)(2,2)}
\qbezier{(0,2)(1,1)(2,0)} \qbezier{(2,0)(2,1)(2,4)}
\qbezier{(0,4)(1,4)(2,4)}\qbezier{(0,0)(-0.25,-0.5)(-0.5,-1)}\qbezier{(0,0)(0.25,-0.5)(0.5,-1)}
\qbezier{(2,0)(1.75,-0.5)(1.5,-1)}
\qbezier{(2,0)(2.25,-0.5)(2.5,-1)}\qbezier{(-0.5,-1)(-0.62,-1.5)(-0.75,-2)}
\qbezier{(0.5,-1)(0.62,-1.5)(0.75,-2)}
\qbezier{(1.5,-1)(1.32,-1.5)(1.25,-2)}\qbezier{(2.5,-1)(2.62,-1.5)(2.75,-2)}

\put(-0.7,0){$v_1$}\put(2.1,0){$v_2$}\put(2.1,2){$v_3$}\put(-0.7,2){$v_4$}
\put(-1.3,-1){$w_1$}\put(0.5,-0.9){$w_2$}\put(1.6,-1.2){$w_3$}\put(2.6,-1){$w_4$}
\put(-1.5,-2){$w_1'$}\put(-0.1,-2){$w_2'$}\put(1.4,-2){$w_3'$}\put(2.8,-2){$w_4'$}
\put(-0.8,4){$w''$}\put(2.1,4){$w$}\put(0.8,4.2){$w'$}
\put(0.4,-2.9){$F_{15,11}$}
\end{picture}}

\setlength{\unitlength}{6mm}\newsavebox{\Mj} \savebox{\Mj}
{\begin{picture}(0,0) \put(0,0){\circle*{0.2}}
\put(2,0){\circle*{0.2}}\put(2,2){\circle*{0.2}}
\put(0,2){\circle*{0.2}}
\put(-1,4){\circle*{0.2}}\put(2,4){\circle*{0.2}}
\put(-0.5,-1){\circle*{0.2}}\put(0.5,-1){\circle*{0.2}}
\put(1.5,-1){\circle*{0.2}}\put(2.5,-1){\circle*{0.2}}
\put(-0.75,-2){\circle*{0.2}}\put(0.75,-2){\circle*{0.2}}
\put(1.25,-2){\circle*{0.2}}\put(2.75,-2){\circle*{0.2}}

\qbezier{(0,0)(1,0)(2,0)} \qbezier{(0,0)(0,0.5)(0,2)}\qbezier{(0,2)(-0.5,3)(-1,4)}
\qbezier{(0,0)(1,1)(2,2)} \qbezier{(0,2)(1,2)(2,2)}
\qbezier{(0,2)(1,1)(2,0)} \qbezier{(2,0)(2,1)(2,4)}
\qbezier{(0,2)(0.5,3)(1,4)} \qbezier{(-1,4)(0.4,4)(1,4)}\qbezier{(0,0)(-0.25,-0.5)(-0.5,-1)}\qbezier{(0,0)(0.25,-0.5)(0.5,-1)}
\qbezier{(2,0)(1.75,-0.5)(1.5,-1)}
\qbezier{(2,0)(2.25,-0.5)(2.5,-1)}\qbezier{(-0.5,-1)(-0.62,-1.5)(-0.75,-2)}
\qbezier{(0.5,-1)(0.62,-1.5)(0.75,-2)}
\qbezier{(1.5,-1)(1.32,-1.5)(1.25,-2)}\qbezier{(2.5,-1)(2.62,-1.5)(2.75,-2)}

\put(-0.7,0){$v_1$}\put(2.1,0){$v_2$}\put(2.1,2){$v_3$}\put(-0.7,2){$v_4$}
\put(-1.3,-1){$w_1$}\put(0.5,-0.9){$w_2$}\put(1.6,-1.2){$w_3$}\put(2.6,-1){$w_4$}
\put(-1.5,-2){$w_1'$}\put(-0.1,-2){$w_2'$}\put(1.4,-2){$w_3'$}\put(2.8,-2){$w_4'$}
\put(-1.8,4){$w_6$}\put(2.1,4){$w_5'$} \put(2,3){\circle*{0.2}}\put(2.2,3){$w_5$} \put(1,4){\circle*{0.2}} \put(0.6,4.2){$w_6'$}
\put(0.4,-2.9){$F_{16,12}$}
\end{picture}}

\setlength{\unitlength}{6mm}\newsavebox{\Mk} \savebox{\Mk}
{\begin{picture}(0,0) \put(0,0){\circle*{0.2}}
\put(2,0){\circle*{0.2}}\put(2,2){\circle*{0.2}}
\put(0,2){\circle*{0.2}}
\put(-1,4){\circle*{0.2}}\put(2,4){\circle*{0.2}}
\put(-0.5,-1){\circle*{0.2}}\put(0.5,-1){\circle*{0.2}}
\put(1.5,-1){\circle*{0.2}}\put(2.5,-1){\circle*{0.2}}
\put(-0.75,-2){\circle*{0.2}}\put(0.75,-2){\circle*{0.2}}
\put(1.25,-2){\circle*{0.2}}\put(2.75,-2){\circle*{0.2}}

\qbezier{(0,0)(1,0)(2,0)} \qbezier{(0,0)(0,0.5)(0,2)}\qbezier{(-1,1)(-1,3)(-1,4)}
\qbezier{(0,0)(1,1)(2,2)} \qbezier{(0,2)(1,2)(2,2)}
\qbezier{(0,2)(1,1)(2,0)} \qbezier{(2,0)(2,1)(2,4)}
\qbezier{(0,2)(0.5,3)(1,4)} \qbezier{(-1,4)(0.4,4)(1,4)}\qbezier{(0,0)(-0.25,-0.5)(-0.5,-1)}\qbezier{(0,0)(0.25,-0.5)(0.5,-1)}
\qbezier{(2,0)(1.75,-0.5)(1.5,-1)}
\qbezier{(2,0)(2.25,-0.5)(2.5,-1)}\qbezier{(-0.5,-1)(-0.62,-1.5)(-0.75,-2)}
\qbezier{(0.5,-1)(0.62,-1.5)(0.75,-2)}
\qbezier{(1.5,-1)(1.32,-1.5)(1.25,-2)}\qbezier{(2.5,-1)(2.62,-1.5)(2.75,-2)}
\qbezier{(-1,1)(-0.5,0.5)(0,0)}\qbezier{(-1,1)(-0.5,1.5)(0,2)} \qbezier{(-1,1)(0.5,0.5)(2,0)}\qbezier{(-1,1)(0.5,1.5)(2,2)}

\put(-0.7,-0.1){$v_1$}\put(2.1,0){$v_2$}\put(2.1,2){$v_3$}\put(-0.7,2){$v_4$}
\put(-1.3,-1){$w_1$}\put(0.5,-0.9){$w_2$}\put(1.6,-1.2){$w_3$}\put(2.6,-1){$w_4$}
\put(-1.5,-2){$w_1'$}\put(-0.1,-2){$w_2'$}\put(1.4,-2){$w_3'$}\put(2.8,-2){$w_4'$}
\put(-1.8,4){$w''$}\put(2.1,4){$w_5'$} \put(2,3){\circle*{0.2}}\put(2.2,3){$w_5$} \put(0,4){\circle*{0.2}}\put(-0.4,4.2){$w'$} \put(1,4){\circle*{0.2}} \put(0.6,4.2){$w$}\put(-1,1){\circle*{0.2}}\put(-1.7,1){$v_5$}
\put(0.4,-2.9){$F_{18,13}$}
\end{picture}}

\setlength{\unitlength}{10mm}
\begin{picture}(4.5,5.4)
\put(0.6,2){\makebox(4.5,3)[b]{\usebox{\Mc}}}
\put(6.2,2){\makebox(4.5,3)[b]{\usebox{\Md}}}
\end{picture}
\vspace{3mm}
\setlength{\unitlength}{10mm}

\begin{picture}(4.5,5.4)
\put(0.6,2){\makebox(4.5,3)[b]{\usebox{\Mj}}}
\put(6.2,2){\makebox(4.5,3)[b]{\usebox{\Mk}}}
\put(3,-0.3){{\scriptsize Fig. 2. The graphs $F_{14,10}$,
$F_{15,11}$, $F_{16,12}$ and $F_{18,13}$. }}
\end{picture}
\vspace{3mm}

\begin{proposition} For any two positive integers $n$ and $k$ with $10\leq k\leq
\frac{4n}{5}$, there exists a graph $G$ of order $n$ with
$\gamma_{t[1,2]}(G)=k$.
\end{proposition}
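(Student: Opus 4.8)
The plan is to show that, for every admissible pair $(n,k)$, the graph $F_{n,k}$ constructed above satisfies $\gamma_{t[1,2]}(F_{n,k})=k$. An $F_{n,k}$ has been defined for all even $k$ with $10\le k\le\lfloor 4n/5\rfloor$ and for all odd $k$ with $10\le k<\lfloor 4n/5\rfloor$, so this settles the proposition apart from two residual families: the extremal pairs $n=\frac{5k}{4}$ (possible only when $k\equiv0\pmod4$), already covered by Theorem~\ref{21} via $H\circ 2P_2$ with $|V(H)|=k/4$; and the odd pairs with $k=\lfloor 4n/5\rfloor$, i.e. $n=\lceil 5k/4\rceil$, which the same two estimates below handle after one replaces $F_{n,k}$ by a small variant (attach the vertices $w,w',w''$ together with one extra pendant edge to the single leftover core vertex $v_{r+1}$, and recheck the count). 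Fix $F=F_{n,k}$, write $K\cong K_{n-k}$ for its complete ``core'' on $\{v_1,\dots,v_{n-k}\}$, and set $L=V(F)\setminus V(K)$, so $|L|=k$.

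For the upper bound, $L$ is itself a total $[1,2]$-set of $F$, so $\gamma_{t[1,2]}(F)\le|L|=k<+\infty$: the graph $F[L]$ is a disjoint union of the edges $w_iw_i'$ together with (in the odd cases) the single path $ww'w''$, whence $\delta(F[L])\ge1$, and inspecting $E'$ shows each vertex of $F$ has exactly one or two neighbours in $L$ (each $v_j$ meets one or two attached vertices, each $w_i'$ meets only $w_i$, each $w_i$ meets $w_i'$ and at most one $v_j$, and each of $w,w',w''$ meets one vertex of $\{w,w',w''\}$ and at most one $v_j$).

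For the lower bound, let $S$ be any total $[1,2]$-set of $F$. The crucial step is the claim that $S$ avoids the core:
\[
S\cap V(K)=\emptyset .
\]
To see this, note that $k\ge10$ forces $r=\lfloor k/4\rfloor\ge 2$, and that in each residue class of $k$ modulo $4$ the construction attaches a \emph{double pendant $P_2$} gadget — the two paths $v_iw_{2i-1}w_{2i-1}'$ and $v_iw_{2i}w_{2i}'$ — to at least two core vertices $v_i$. For every such $v_i$ the leaves $w_{2i-1}',w_{2i}'$ force their supports $w_{2i-1},w_{2i}$ into $S$, so $v_i$ already has these two neighbours in $S$. Suppose now that some core vertex $v_\ell\in S$, and pick a double-gadget core vertex $v_i\ne v_\ell$ (possible since there are at least two of them). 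Since $K$ is complete, $v_\ell$ is a neighbour of $v_i$, so $v_i$ has the three distinct $S$-neighbours $w_{2i-1},w_{2i},v_\ell$, contradicting $|N(v_i)\cap S|\le 2$; the claim follows.

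Granting the claim, the rest is pure forcing: each leaf $w_i'$ forces $w_i\in S$, and then totality of $w_i$ — whose only other neighbour is the core vertex it is attached to, now outside $S$ — forces $w_i'\in S$; and each remaining attached vertex (the pair $w_{2r},w_{2r}'$ in the even cases, or $w,w',w''$ in the odd cases) is dragged into $S$ because it is the unique attached neighbour of a core vertex it must dominate, or by the totality of a vertex already shown to lie in $S$. Hence $L\subseteq S$, so $\gamma_{t[1,2]}(F)\ge|L|=k$, and with the upper bound equality holds. I expect the main obstacle to be exactly the claim $S\cap V(K)=\emptyset$: one must preclude a minimum total $[1,2]$-set from economising by borrowing a core vertex, and what forbids this is that the completeness of $K_{n-k}$ together with the presence — guaranteed by $k\ge10$ — of at least two double-$P_2$ gadgets turns any core vertex in $S$ into a vertex with three neighbours in $S$; for the residual odd boundary pairs one simply reruns the two estimates after checking that the modified gadget on $v_{r+1}$ still forces precisely the intended number of attached vertices.
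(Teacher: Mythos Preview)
Your argument is essentially the paper's: exhibit the attached set $L$ (the paper's $W$) as a total $[1,2]$-set for the upper bound, rule out $S\cap V(K_{n-k})\ne\emptyset$ by using two core vertices carrying double-$P_2$ gadgets (the paper names $v_1,v_2$ explicitly, you pick an abstract double-gadget $v_i\ne v_\ell$), and then force $L\subseteq S$ by totality once the core is excluded. A couple of descriptive slips in your upper-bound parenthetical are harmless but worth correcting---in the odd cases $w'$ has \emph{two} $L$-neighbours $w$ and $w''$, not one, and the terminal attached vertices such as $w_{2r}$ when $k\equiv0\pmod4$ meet all of $v_{r+1},\dots,v_{n-k}$, not ``at most one $v_j$''---since only neighbours in $L$ are relevant the conclusion stands; you also go a step beyond the paper in flagging the odd boundary pairs $k=\lfloor 4n/5\rfloor$, which the paper's construction and proof do not explicitly cover.
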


\begin{proof} Note that the order of the graph $F_{n,k}$
as constructed above is $n$ and $$W=\left\{
\begin{array}{ll}
\{w_1, \ldots, w_{\frac k 2}\}\cup\{w_1',
\ldots, w_{\frac k 2}'\}, & \mbox{$k\equiv 0\quad (mod\ 2)$}\\
\{w_1, \ldots, w_{\frac {k-3} 2}\}\cup\{w_1', \ldots, w_{\frac {k-3}
2}'\}\cup \{w, w', w''\}, & \mbox{$k\equiv\ 1\quad (mod\ 2)$},\\
\end{array}
\right.
$$ is a total $[1,2]$-set of $G$. So, $\gamma_{t[1,2]}(F_{n,k})\leq k$.
Take a $\gamma_{t[1,2]}$-set $S$ of $F_{n,k}$.

\vspace{3mm}\noindent{\bf Case 1.} $k\equiv\ 0\ (mod\ 2)$.

\vspace{3mm} Since $S$ is a total dominating set, $\{w_1,w_2,\ldots,
w_{\frac{k-2}{2}}\}\subseteq S$. On the other hand, $V(K_{n-k})\cap
S=\emptyset$. Otherwise, if $v_i\in S$ for some $1\leq i\leq n-k$,
then $\{v_i,w_1,w_2\}\subseteq N(v_1)\cap S$ if $i\neq 1$ or
$\{v_i,w_3,w_4\}\subseteq N(v_2)\cap S$ if $i=1$, both contradicting
that $S$ is a total $[1,2]$-set of $G$. Then Combining the above two
facts with the other fact that $\{w_1,w_2,\ldots,w_{\frac{k}{2}}\}$
is an independent set of $F_{n,k}$, it follows that
$S=\{w_1,w_2,\ldots,w_{\frac{k}{2}}\}\cup
\{w_1',w_2',\ldots,w_{\frac{k}{2}}'\}$, and thus
$\gamma_{t[1,2]}(F_{n,k})=k$.

\vspace{3mm}\noindent{\bf Case 2.} $k\equiv\ 1\ (mod\ 2)$.

\vspace{3mm} Since $S$ is a total dominating set, $\{w_1,w_2,\ldots,
w_{\frac{k-3}{2}}\}\subseteq S$. On the other hand, $V(K_{n-k})\cap
S=\emptyset$. Otherwise, if $v_i\in S$ for some $1\leq i\leq n-k$,
then $\{v_i,w_1,w_2\}\subseteq N(v_1)\cap S$ if $i\neq 1$ or
$\{v_i,w_3,w_4\}\subseteq N(v_2)\cap S$ if $i=1$, both contradicting
that $S$ is a total $[1,2]$-set. Then Combining the above two facts
with the other fact that
$\{w_1,w_2,\ldots,w_{\frac{k-3}{2}},w,w''\}$ is an independent set
of $F_{n,k}$, it follows that
$S=\{w_1,w_2,\ldots,w_{\frac{k-3}{2}}\}\cup
\{w_1',w_2',\ldots,w_{\frac{k-3}{2}}'\}\cup \{w,w',w''\}$, and thus
$\gamma_{t[1,2]}(F_{n,k})=k$.
\end{proof}

The upper bound on $\gamma_{t[1,2]}(G)$ in Theorem \ref{21} is sharp. However, every
extremal graph $G$ with $\gamma_{t[1,2]}(G)=\frac{4n}{5}$ satisfies that $\delta(G)=1$. It is natural to consider the
upper bound on $\gamma_{t[1,2]}(G)$ when $\delta(G)\geq 2$.

\begin{theorem}{\label{22}}
Let $G$ be a connected graph of order $n\geq 3$ and $\delta(G)\geq
2$. If $\gamma_{t[1,2]}(G)<+\infty$, then $$\gamma_{t[1,2]}(G)\leq
\frac{2n}{3}.$$
\end{theorem}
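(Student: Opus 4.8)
I would mimic the structure of the proof of Theorem \ref{21}, but now exploit the extra hypothesis $\delta(G)\ge 2$ to squeeze the bound from $\frac{4n}{5}$ down to $\frac{2n}{3}$. Let $S$ be a $\gamma_{t[1,2]}(G)$-set. As before, since $1\le\delta(G[S])\le\Delta(G[S])\le 2$, every component of $G[S]$ is a path or a cycle, and I partition $S$ into $S_1,S_2,S_3,S_4$ exactly as in Theorem \ref{21} (cycle components, $K_2$-components, $P_3$-components, and longer-path components). Write $U=V(G)\setminus\big[S\cup pri(S_1,S)\cup pri(S_3,S)\cup pri(S_4,S)\big]$ so that $n=|S|+|pri(S_1,S)|+|pri(S_3,S)|+|pri(S_4,S)|+|U|$. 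From Claim 1 of Theorem \ref{21} I already have $|pri(S_1,S)|\ge|S_1|$, $|pri(S_3,S)|\ge\frac{2|S_3|}{3}$, and $|pri(S_4,S)|\ge|S_4|-2\omega_4$ with $\omega_4\le\frac{|S_4|}{4}$.

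**Where $\delta(G)\ge 2$ enters.** The key new ingredient is a strengthening of Claim 2. When $\delta(G)=1$ a $K_2$-component of $G[S]$ may attach to $V\setminus S$ through a single edge, which is what produced the weak bound $|U|\ge\frac{|S_2|}{4}$. But with $\delta(G)\ge 2$, each of the two vertices of a $K_2$-component has a neighbor outside that component, and that neighbor lies in $U$ unless it is a private neighbor claimed by $S_1\cup S_3\cup S_4$ — and a vertex of a $K_2$-component cannot itself be such a private neighbor. So I would argue that the bipartite "attachment" count improves: with $W=\bigcup_{u\in S_2}N(u)\cap U$, minimum degree $2$ forces $|E[S_2,W]|\ge|S_2|$ (each vertex of $S_2$ sends at least one edge out of its $K_2$-component, and since $G[S]$ has no edges between distinct components, that edge goes either to $U$ or into $pri(\cdot)$-sets attached to other $S_i$; a careful case analysis, possibly moving a few $K_2$-components' neighbors into a separate accounting, yields $2|W|\ge|E[S_2,W]|\ge|S_2|$), hence $|U|\ge|W|\ge\frac{|S_2|}{2}$. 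Substituting,
\begin{eqnarray*}
n&\ge&|S|+|S_1|+\tfrac{2|S_3|}{3}+|S_4|-2\omega_4+\tfrac{|S_2|}{2}\\
&\ge&|S|+\tfrac{|S_1|}{2}+\tfrac{|S_2|}{2}+\tfrac{2|S_3|}{3}+\tfrac{|S_4|}{2}\\
&\ge&\tfrac{3|S|}{2},
\end{eqnarray*}
giving $\gamma_{t[1,2]}(G)=|S|\le\frac{2n}{3}$.

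**The main obstacle.** The delicate point is exactly the improved Claim 2: I must make sure that the "extra" edges leaving the $K_2$-components really land in $U$ and are not double-counted against the private-neighbor sets of $S_1$, $S_3$, $S_4$, nor against each other in a way that overcounts $W$. A clean way is to redefine the accounting so that each vertex is charged to exactly one of the five sets in the partition of $V(G)$, and to observe that a vertex of a $K_2$-component of $G[S]$, having its unique $S$-neighbor inside the component, can never be an element of $pri(S_1,S)\cup pri(S_3,S)\cup pri(S_4,S)$; hence every out-neighbor of $S_2$ lies in $S_2\cup U$, and an out-neighbor in $S_2$ would be a third component structure (impossible since components of $G[S]$ are vertex-disjoint and a $K_2$ has only its two vertices). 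This pins all such out-neighbors in $U$ and makes $|E[S_2,W]|\ge|S_2|$ legitimate. The remaining inequalities are routine coefficient-chasing, so I expect no further difficulty; one should also record that the bound $\frac{2n}{3}$ is consistent with Theorem \ref{bri,cock} since $\gamma_t(G)\le\gamma_{t[1,2]}(G)$, and note (without a full extremal characterization, which the statement does not ask for) that graphs like $C_3$, $C_6$, and $C_3\circ 2P_2$-type constructions show the bound cannot in general be lowered.
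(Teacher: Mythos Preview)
Your proposal is correct and follows essentially the same route as the paper: the identical partition $S=S_1\cup S_2\cup S_3\cup S_4$, the same Claim~1 bounds, and the strengthened Claim~2 that $|U|\ge\frac{|S_2|}{2}$ via $2|W|\ge|E[S_2,W]|\ge|S_2|$, leading to $n\ge\frac{3|S|}{2}$. Your ``main obstacle'' worry is unnecessary: any neighbor $w\in V(G)\setminus S$ of a vertex $v\in S_2$ has $v$ as an $S$-neighbor, so $w$ cannot lie in $pri(S_1,S)\cup pri(S_3,S)\cup pri(S_4,S)$ and must lie in $U$ --- exactly the one-line justification the paper gives (note that it is the \emph{neighbor} of $v$, not $v$ itself, whose membership in $U$ you need).
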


\begin{proof} Let $S$ be a $\gamma_{t[1,2]}(G)$-set of $G$. Same as the proof of
Theorem \ref{21}, we divide $S$ into four subsets $S_1$,
$S_2$, $S_3$ and $S_4$.

\vspace{3mm}\noindent{\bf Claim 1.} (1) $|pri(S_1, S)|\geq |S_1|$,
(2) $|pri(S_3, S)|\geq \frac{2|S_3|}{3}$, (3) $|pri(S_4, S)|\geq
|S_4|-2\omega_4$, where $\omega_4$ denotes the number of components
in $G[S]$, which is isomorphic to a path of order at least four.

\vspace{3mm}\noindent{\bf Claim 2.} $|U|\geq \frac{|S_2|}{2}$ where
$U=V(G)\setminus (S\cup pri(S_1, S)\cup pri(S_3, S)\cup pri(S_4,
S))$.

\vspace{3mm}\noindent{\bf Proof of Claim 2.} Since $\delta(G)\geq
2$, every vertex in $S_2$ has at least one neighbor in $U$. Let
$W=\cup_{u\in S_2}N(u)\cap U$. Since every vertex in $W$ is adjacent
to at most $2$ vertices in $S$ and $\delta(G)\geq 2$, we have
$$2|W|\geq |E[S,W]|\geq |E[S_2,W]|\geq \sum_{v\in S_2} (d(v)-1)\geq
\sum_{v\in S_2} 1\geq |S_2|,$$ and thus $|U|\geq |W|\geq
\frac{|S_2|}{2}$. \hfill\qed

Since $\omega_4\leq \frac{|S_4|}{4}$, we have
\begin{eqnarray*}
n&=& |S|+|pri(S_1, S)|+|pri(S_3, S)|+|pri(S_4, S)|+|U|\\
&\geq & |S|+|S_1|+\frac{2|S_3|}{3}+|S_4|-2\omega_4+\frac{|S_2|}{2}\\
&\geq & \frac{3|S|}{2}+\frac{|S_1|}{2}+\frac{|S_3|}{6}\\
&\geq & \frac{3|S|}{2}.
\end{eqnarray*}

So $\gamma_{t[1,2]}(G)\leq \frac{2n}{3}$ and the
equality holds only if $S_1=S_3=\emptyset$, $S_4=4\omega_4$,
$|pri(S_4)|=\frac{|S_4|}{2}$ and $|U|=|W|=\frac{|S_2|}{2}$.

\end{proof}

This upper bound is also sharp. For an integer $k\geq 4$, we can
construct a graph $F_k$ of order $n=3k$ with
$\gamma_{t[1,2]}(F_k)=2k$ as follows. We start from the complete
graph $K_k$ with $V(K_k)=\{v_1,v_2,\cdots,v_k\}$. Let
$V(F_k)=V(K_k)\cup W\cup W'$, where $W=\{w_1, w_2, \cdots, w_k\}$
and $W'= \{w_1', w_2', \cdots, w_{k}'\}$, and $E(F_k)=
\cup_{i=1}^k\{v_iw_i, w_iw_i', w_i'v_{i+1}\}\cup E(K_k),$ where
$i+1$ is taken modulo $k$. Therefore, $F_k$ is a graph of order $3k$
and for any $1\leq i\leq k$, $d(v_i)=k+1$ and $d(w_i)=d(w_i')=2$.
The graph $F_4$ is shown in Fig. 3.

\setlength{\unitlength}{6mm}\newsavebox{\Ma} \savebox{\Ma}
{\begin{picture}(0,0) \put(0,0){\circle*{0.2}}
\put(2,0){\circle*{0.2}}\put(2,2){\circle*{0.2}}
\put(0,2){\circle*{0.2}}
\put(0.5,3){\circle*{0.2}}\put(1.5,3){\circle*{0.2}}
\put(-1,1.5){\circle*{0.2}}\put(-1,0.5){\circle*{0.2}}
\put(0.5,-1){\circle*{0.2}}\put(1.5,-1){\circle*{0.2}}
\put(3,0.5){\circle*{0.2}}\put(3,1.5){\circle*{0.2}}

\qbezier{(0,0)(1,0)(2,0)} \qbezier{(0,0)(0,0.5)(0,2)}
\qbezier{(0,0)(1,1)(2,2)} \qbezier{(0,2)(1,2)(2,2)}
\qbezier{(0,2)(1,1)(2,0)} \qbezier{(2,0)(2,1)(2,2)}
\qbezier{(0,2)(-0.5,1.75)(-1,1.5)}
\qbezier{(-1,1.5)(-1,1)(-1,0.5)}\qbezier{(0,0)(-0.5,0.25)(-1,0.5)}
\qbezier{(2,0)(1.75,-0.5)(1.5,-1)}
\qbezier{(0.5,-1)(1,-1)(1.5,-1)}\qbezier{(0,0)(0.25,-0.5)(0.5,-1)}
\qbezier{(2,0)(2.5,0.25)(3,0.5)}
\qbezier{(3,0.5)(3,1)(3,1.5)}\qbezier{(2,2)(2.5,1.75)(3,1.5)}
\qbezier{(2,2)(1.75,2.5)(1.5,3)}
\qbezier{(0.5,3)(1,3)(1.5,3)}\qbezier{(0,2)(0.25,2.5)(0.5,3)}

\put(-0.7,-0.4){$v_1$}\put(2.1,-0.4){$v_2$}\put(2.1,2){$v_3$}\put(-0.8,2){$v_4$}
\put(-1.9,0.4){$w_4'$}\put(-1.9,1.4){$w_4$}\put(0.1,-1.4){$w_1$}\put(1.4,-1.4){$w_1'$}
\put(1.4,3.2){$w_3$}\put(0,3.2){$w_3'$}\put(3.1,0.4){$w_2$}\put(3.1,1.4){$w_2'$}
\end{picture}}

\setlength{\unitlength}{6mm}\newsavebox{\Mb} \savebox{\Mb}
{\begin{picture}(0,0) \put(0,0){\circle*{0.2}}
\put(2,0){\circle*{0.2}}\put(2,2){\circle*{0.2}}
\put(0,2){\circle*{0.2}}
\put(0.5,3){\circle*{0.2}}\put(1.5,3){\circle*{0.2}}
\put(-1,1.5){\circle*{0.2}}\put(-1,0.5){\circle*{0.2}}
\put(0.5,-1){\circle*{0.2}}\put(1.5,-1){\circle*{0.2}}
\put(3,0.5){\circle*{0.2}}\put(3,1.5){\circle*{0.2}}

\put(0,4){\circle*{0.2}}\put(2,4){\circle*{0.2}}
\qbezier{(0,2)(0,3)(0,4)} \qbezier{(2,2)(2,3)(2,4)}
\qbezier{(0,4)(1,4)(2,4)}
\put(-2,2){\circle*{0.2}}\put(-2,0){\circle*{0.2}}
\qbezier{(0,2)(-1,2)(-2,2)} \qbezier{(0,0)(-1,0)(-2,0)}
\qbezier{(-2,0)(-2,1)(-2,2)}
\put(0,-2){\circle*{0.2}}\put(2,-2){\circle*{0.2}}
\qbezier{(0,0)(0,-1)(0,-2)} \qbezier{(2,0)(2,-1)(2,-2)}
\qbezier{(0,-2)(1,-2)(2,-2)}
\put(4,2){\circle*{0.2}}\put(4,0){\circle*{0.2}}
\qbezier{(2,0)(3,0)(4,0)} \qbezier{(2,2)(3,2)(4,2)}
\qbezier{(4,0)(4,1)(4,2)}

\qbezier{(0,0)(1,0)(2,0)} \qbezier{(0,0)(0,0.5)(0,2)}
\qbezier{(0,0)(1,1)(2,2)} \qbezier{(0,2)(1,2)(2,2)}
\qbezier{(0,2)(1,1)(2,0)} \qbezier{(2,0)(2,1)(2,2)}
\qbezier{(0,2)(-0.5,1.75)(-1,1.5)}
\qbezier{(-1,1.5)(-1,1)(-1,0.5)}\qbezier{(0,0)(-0.5,0.25)(-1,0.5)}
\qbezier{(2,0)(1.75,-0.5)(1.5,-1)}
\qbezier{(0.5,-1)(1,-1)(1.5,-1)}\qbezier{(0,0)(0.25,-0.5)(0.5,-1)}
\qbezier{(2,0)(2.5,0.25)(3,0.5)}
\qbezier{(3,0.5)(3,1)(3,1.5)}\qbezier{(2,2)(2.5,1.75)(3,1.5)}
\qbezier{(2,2)(1.75,2.5)(1.5,3)}
\qbezier{(0.5,3)(1,3)(1.5,3)}\qbezier{(0,2)(0.25,2.5)(0.5,3)}
\put(0.8,-3){$F_4'$}
\end{picture}}
\setlength{\unitlength}{10mm}
\begin{picture}(4.5,3.5)
\put(3.3,1){\makebox(4.5,3)[b]{\usebox{\Ma}}}
\put(4.9,-0.5){{\scriptsize Fig. 3. The graph $F_4$ }}
\end{picture}
\vskip 0.5cm

\vspace{4mm}

\begin{proposition} \label{22} For an integer $k\geq 4$,
$\gamma_{t[1,2]}(F_k)=\frac{2n}{3}$, where $n=3k$.
\end{proposition}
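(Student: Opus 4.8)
The plan is to prove the two inequalities $\gamma_{t[1,2]}(F_k)\le 2k$ and $\gamma_{t[1,2]}(F_k)\ge 2k$ separately; since $n=3k$, together they give $\gamma_{t[1,2]}(F_k)=\frac{2n}{3}$. For the upper bound I would simply exhibit the set $S_0=W\cup W'$ and check the defining conditions directly: every $v_i$ has exactly the two neighbours $w_i$ and $w_{i-1}'$ in $S_0$, every $w_i$ has the single $S_0$-neighbour $w_i'$, and every $w_i'$ has the single $S_0$-neighbour $w_i$. Hence $S_0$ is a total $[1,2]$-set, so $\gamma_{t[1,2]}(F_k)\le |S_0|=2k$ (in particular it is finite, so one could also invoke Theorem 2.6 since $F_k$ is connected with $\delta(F_k)=2$, but the explicit set already gives the sharp value).

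For the lower bound, let $S$ be a $\gamma_{t[1,2]}(F_k)$-set and set $a=|S\cap V(K_k)|$, $b=|S\cap W|$, $c=|S\cap W'|$. I would first show $a\le 2$: if some $v_i\notin S$, then $v_i$ is adjacent in $K_k$ to all $a$ vertices of $S\cap V(K_k)$, so the $[1,2]$-condition at $v_i$ forces $a\le 2$; and $a\ge 3$ would force $S\supseteq V(K_k)$, whence every clique vertex would have $k-1\ge 3$ clique-neighbours in $S$, impossible. Next, since $S$ is a total dominating set, domination of $w_i$ requires $v_i\in S$ or $w_i'\in S$, and domination of $w_i'$ requires $w_i\in S$ or $v_{i+1}\in S$; summing each of these over $i=1,\dots,k$ yields $a+c\ge k$ and $a+b\ge k$, i.e. $b\ge k-a$ and $c\ge k-a$.

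The crux is then to play these lower bounds off against an \emph{upper} bound on $b+c$ coming from the $[1,2]$-condition at the clique vertices. For $v_i\notin S$ this condition reads $a+[w_i\in S]+[w_{i-1}'\in S]\le 2$, while for $v_i\in S$ it reads $(a-1)+[w_i\in S]+[w_{i-1}'\in S]\le 2$. Summing all $k$ of these, and noting $\sum_i[w_i\in S]=b$ and $\sum_i[w_{i-1}'\in S]=c$, gives
\[
b+c\;\le\;(k-a)(2-a)+a(3-a)\;=\;2k-a(k-1).
\]
Combining with $b+c\ge 2(k-a)$ from the previous step, one gets $2(k-a)\le 2k-a(k-1)$, which simplifies to $a(k-3)\le 0$; since $k\ge 4$ this forces $a=0$. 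With $a=0$, domination of the $w_i$'s and the $w_i'$'s forces $W'\subseteq S$ and $W\subseteq S$, so $|S|\ge 2k$, completing the argument.

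The main thing to be careful about is the last double count: one must remember that the (at most two) clique vertices that happen to lie in $S$ obey the weaker inequality with $a-1$ in place of $a$, and it is precisely this bookkeeping that makes the final estimate come out as the clean inequality $a(k-3)\le 0$ rather than something vacuous. Everything else is elementary counting, and no case analysis beyond the three short steps above should be needed.
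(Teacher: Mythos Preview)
Your argument is correct. Both the upper bound (via the explicit set $W\cup W'$) and the lower bound go through as written; in particular the double count is handled carefully, since you already know $a\le 2$ before summing, so the right-hand sides $2-a$ and $3-a$ are nonnegative and the inequality $b+c\le 2k-a(k-1)$ is meaningful.

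Your route to the lower bound differs from the paper's. The paper argues by contradiction: assuming $|S|<2k$ forces $S\cap V(K_k)\neq\emptyset$, and then it splits into the two cases $|S\cap V(K_k)|=1$ and $|S\cap V(K_k)|=2$, in each case exhibiting a specific clique vertex $v_l$ with at least three neighbours in $S$. Your approach replaces this case analysis by a uniform counting argument: the domination constraints at the $w_i$ and $w_i'$ give $b+c\ge 2(k-a)$, while summing the $[1,2]$-constraints at all the $v_i$ gives $b+c\le 2k-a(k-1)$, and these combine to $a(k-3)\le 0$, forcing $a=0$ for $k\ge 4$. What you gain is a case-free proof that makes the role of the hypothesis $k\ge 4$ completely transparent (it is exactly what makes $k-3>0$); what the paper's version buys is that it avoids any summation and simply points to a single offending vertex in each case.
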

\begin{proof} Let $S=W\cup W'$. One can see that $S$ is a total $[1,2]$-set of
$F_k$ and thus $\gamma_{t[1,2]}(F_k)\leq \frac{2n}{3}$. Now suppose
that $\gamma_{t[1,2]}(F_k)<\frac{2n}{3}$ and $S'$ is a
$\gamma_{t[1,2]}$-set of $F_k$. Then $S'\cap V(K_k)\neq \emptyset$
and $|S'\cap V(K_k)|\leq 2$.

Assume first that $|V(K_k)\cap S'|=2$ and let $v_i, v_j\in S'$ for
some two distinct $i, j\in \{1, 2, \cdots, k\}$. In this case,
$(\{v_1, v_2,\cdots, v_k\}\setminus \{v_i, v_j\})\cap S'=\emptyset$
and thus $w_l, w_l'\in S'$ for any $l\in \{1, 2, \cdots,
k\}\setminus \{i, j\}$. But, $|N_G(v_l)\cap S'|\geq 3$, a
contradiction. Now assume that $|V(K_k)\cap S'|=1$ and $V(K_k)\cap
S'=\{v_1\}$. Since $v_i\not\in S'$ for any $i\in\{2, 3, 4\}$, we
have $w_2, w_2'\in S'$ and $w_3, w_3'\in S'$, and thus
$|N_G(v_3)\cap S'|\geq 3$, a contradiction.

\end{proof}

Now, for $8\leq k\leq \frac{2n}{3}-1$, we construct a graph
$H_{n,k}$ of order $n$, $\gamma_{t[1,2]}(H_{n,k})=k$ and
$\delta(H_{n,k})\geq 2$ as follows. We start from the complete graph
$K_{n-k}$ with $V(K_{n-k})=\{v_1,v_2,\cdots,v_{n-k}\}$ and
$r=\lfloor\frac{k-2}{2}\rfloor$. The graph $H_{n,k}$ has
$V(H_{n,k})=V(K_{n_k})\cup \{w_1, \ldots, w_r\}\cup \{w_1', \ldots,
w_r'\}\cup V'$, where
$$V'=\left\{
\begin{array}{ll}
\{w, w'\}, & \mbox{if\ $k=2r+2$ }\\
\{w, w', w''\}, & \mbox{ if\ $k=2r+3$ },\\
\end{array}
\right.
$$ and $$E(H_{n,k})=E(K_{n-k})\cup\{v_iw_i, w_iw_i',
w_i'v_{i+1}|\ 1\leq i\leq r-1\}\cup \{v_rw_r,w_rw_r',w_r'v_1\}\cup  E',$$ where
$$E'=
\{ww'\}\cup \{v_jw,v_jw'|\ r+1\leq j\leq
n-k\},$$ if $ k=2r+2$; and $$E'=
\{v_{r+1}w, v_{r+2}w'',ww',w'w''\}
\cup \{v_jw,v_jw''|\ r+3\leq j\leq n-k \}, $$ if $k=2r+3$.
See Fig. 4 for an
illustration for $H_{14,8}$ and $H_{15,9}$.

\setlength{\unitlength}{6mm}\newsavebox{\Me} \savebox{\Me}
{\begin{picture}(0,0) \put(0,0){\circle*{0.2}}
\put(2,0){\circle*{0.2}}\put(2,2){\circle*{0.2}}
\put(0,2){\circle*{0.2}}
\put(0,4){\circle*{0.2}}\put(2,4){\circle*{0.2}}
\put(-1,3.5){\circle*{0.2}}\put(-1,2.5){\circle*{0.2}}
\put(-1,1.5){\circle*{0.2}}\put(-1,0.5){\circle*{0.2}}
\put(-2,3){\circle*{0.2}}\put(-2,1){\circle*{0.2}}
\put(4,3){\circle*{0.2}}\put(4,1){\circle*{0.2}}
\put(0.7,1.5){$K_6$}

\qbezier{(0,0)(1,0)(2,0)} \qbezier{(0,0)(0,0.5)(0,4)}
\qbezier{(2,0)(2,1)(2,4)} \qbezier{(0,4)(1,4)(2,4)}
\qbezier{(0,0)(-0.5,0.25)(-1,0.5)}\qbezier{(0,0)(-1.2,-0.2)(-2,1)}
\qbezier{(0,2)(-0.5,1.75)(-1,1.5)}
\qbezier{(0,2)(-0.5,2.25)(-1,2.5)}\qbezier{(-1,0.5)(-1,1)(-1,1.5)}
\qbezier{(0,4)(-0.5,3.75)(-1,3.5)}
\qbezier{(0,4)(-1.2,4.2)(-2,3)}\qbezier{(-1,3.5)(-1,3)(-1,2.5)}
\qbezier{(-2,3)(-2,2)(-2,1)} \qbezier{(2,0)(3,1.5)(4,3)}\qbezier{(2,2)(3,1.5)(4,1)}
\qbezier{(4,3)(3,2.5)(2,2)}
\qbezier{(4,3)(3,3.5)(2,4)}\qbezier{(4,1)(3,0.5)(2,0)}
\qbezier{(4,1)(3,2.5)(2,4)}\qbezier{(4,1)(4,2)(4,3)}
\put(0.4,-1.1){$H_{14,8}$}\put(0,4.1){$v_1$} \put(0,2.2){$v_2$}\put(0,-0.4){$v_3$} \put(2,-0.4){$v_4$}\put(1.4,2.2){$v_5$} \put(2,4.1){$v_6$}
\put(4.1,1){$w$} \put(4.1,3){$w'$}

\end{picture}}

\setlength{\unitlength}{6mm}\newsavebox{\Mf} \savebox{\Mf}
{\begin{picture}(0,0) \put(0,0){\circle*{0.2}}
\put(2,0){\circle*{0.2}}\put(2,2){\circle*{0.2}}
\put(0,2){\circle*{0.2}}
\put(0,4){\circle*{0.2}}\put(2,4){\circle*{0.2}}
\put(-1,3.5){\circle*{0.2}}\put(-1,2.5){\circle*{0.2}}
\put(-1,1.5){\circle*{0.2}}\put(-1,0.5){\circle*{0.2}}
\put(-2,3){\circle*{0.2}}\put(-2,1){\circle*{0.2}}
\put(4,3){\circle*{0.2}}\put(4,1){\circle*{0.2}}
\put(4,2){\circle*{0.2}} \put(0.7,1.5){$K_6$}

\qbezier{(0,0)(1,0)(2,0)} \qbezier{(0,0)(0,0.5)(0,4)}
\qbezier{(2,0)(2,1)(2,4)} \qbezier{(0,4)(1,4)(2,4)}
\qbezier{(0,0)(-0.5,0.25)(-1,0.5)}\qbezier{(0,0)(-1.2,-0.2)(-2,1)}
\qbezier{(0,2)(-0.5,1.75)(-1,1.5)}
\qbezier{(0,2)(-0.5,2.25)(-1,2.5)}\qbezier{(-1,0.5)(-1,1)(-1,1.5)}
\qbezier{(0,4)(-0.5,3.75)(-1,3.5)}
\qbezier{(0,4)(-1.2,4.2)(-2,3)}\qbezier{(-1,3.5)(-1,3)(-1,2.5)}
\qbezier{(-2,3)(-2,2)(-2,1)}
\qbezier{(4,3)(3,2.5)(2,2)}
\qbezier{(4,1)(3,0.5)(2,0)}\qbezier{(2,4)(3,2.5)(4,1)}\qbezier{(2,4)(3,3.5)(4,3)}
\qbezier{(4,1)(4,2)(4,3)}
\put(0.4,-1){$H_{15,9}$}\put(0,4.1){$v_1$} \put(0,2.2){$v_2$}\put(0,-0.4){$v_3$} \put(2,-0.4){$v_4$}\put(1.4,2.2){$v_5$} \put(2,4.1){$v_6$}
\put(4.1,1){$w$} \put(4.1,3){$w''$}\put(4.1,2){$w'$}

\end{picture}}

\setlength{\unitlength}{10mm}
\begin{picture}(4.5,5)
\put(0.5,2){\makebox(4.5,3)[b]{\usebox{\Me}}}
\put(6.5,2){\makebox(4.5,3)[b]{\usebox{\Mf}}}
\put(4.5,0.7){{\scriptsize Fig. 4. The graphs $H_{14,8}$ and
$H_{15,9}$}}
\end{picture}

\begin{proposition} For any $n$ and $k$ with $8\leq k\leq \frac{2n}{3}-1$,
there exists a graph $G$ of order $n$
with $\delta(G)\geq 2$ and $\gamma_{t[1,2]}(G)=k$.
\end{proposition}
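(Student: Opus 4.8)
The plan is to argue exactly as in the proof of the proposition on the graphs $F_{n,k}$. First I would check that $H_{n,k}$ has order $n$: it has $n-k$ clique vertices, the $2r$ vertices $w_i,w_i'$, and $|V'|$ further vertices, and since $2r+|V'|=k$ in both cases ($|V'|=2$ when $k=2r+2$, $|V'|=3$ when $k=2r+3$), the total is $(n-k)+k=n$. Then I would verify $\delta(H_{n,k})\ge 2$: each $w_i$ and each $w_i'$ has degree exactly $2$; each vertex of $V'$ has degree at least $2$ (for $k=2r+3$, $w$ is adjacent to $v_{r+1}$ and $w'$, $w''$ to $v_{r+2}$ and $w'$, and $w'$ to both $w$ and $w''$; for $k=2r+2$, $w$ and $w'$ are adjacent to each other and to $v_{r+1}$); and each $v_i$ has degree at least $n-k-1\ge 2$. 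Here one uses that the index ranges in $E'$ are non-empty, which follows from $k\le\frac{2n}{3}-1$ (this forces $n-k\ge r+2$).

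For the upper bound I would take the set $W=\{w_1,\dots,w_r\}\cup\{w_1',\dots,w_r'\}\cup V'$ of the construction and check directly that $W$ is a total $[1,2]$-set: each $w_i$ (resp.\ $w_i'$) has the single neighbour $w_i'$ (resp.\ $w_i$) in $W$; each $v_i$ with $i\le r$ has exactly the two neighbours $w_i,w_{i-1}'$ in $W$ (indices mod $r$); for $k=2r+2$ each $v_j$ with $j>r$ has the two neighbours $w,w'$ in $W$ while $w,w'$ each have one neighbour; for $k=2r+3$, $v_{r+1}$ has the one neighbour $w$, $v_{r+2}$ the one neighbour $w''$, each $v_j$ with $j\ge r+3$ the two neighbours $w,w''$, while $w$ and $w''$ have the single neighbour $w'$ and $w'$ has the two neighbours $w,w''$. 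Since $|W|=2r+|V'|=k$, this gives $\gamma_{t[1,2]}(H_{n,k})\le k$.

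For the lower bound, let $S$ be a $\gamma_{t[1,2]}$-set and treat the cases $k=2r+2$ and $k=2r+3$ separately. The core of the argument is the claim $V(K_{n-k})\cap S=\emptyset$. One first notes $|V(K_{n-k})\cap S|\le 2$, since three clique vertices in $S$ would give some other clique vertex (which exists, as $n-k\ge 3$) three neighbours in $S$. Then, supposing some $v_a\in S$, I would propagate the total-domination constraints through the degree-$2$ gadget vertices: each $w_i$ whose clique-neighbour $v_i$ lies outside $S$ forces $w_i'\in S$, and symmetrically $w_i'$ forces $w_i$; every vertex of $V'$ not already dominated by a clique vertex of $S$ is forced into $S$ (when $k=2r+3$, one also uses that $v_{r+1}$, whose only non-clique neighbour is $w$, forces $w\in S$ unless dominated by a clique vertex of $S$, and likewise $v_{r+2}$ forces $w''$). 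Combining these forced memberships with the $[1,2]$ upper bound applied either to $v_a$ or to a clique vertex outside $S$ — such a vertex already sees all of $V(K_{n-k})\cap S$, so it can absorb essentially no gadget neighbour — one should reach a vertex with three neighbours in $S$, a contradiction; ruling out the sub-case $|V(K_{n-k})\cap S|=2$ uses $r\ge 3$, which holds since $k\ge 8$. Once $V(K_{n-k})\cap S=\emptyset$, total domination of $w_i$ forces $w_i'\in S$ and of $w_i'$ forces $w_i\in S$, while total domination of the vertices of $V'$ (and of $v_{r+1},v_{r+2}$ when $k=2r+3$) forces $V'\subseteq S$; hence $|S|\ge 2r+|V'|=k$, and with the upper bound $\gamma_{t[1,2]}(H_{n,k})=k$.

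I expect the step $V(K_{n-k})\cap S=\emptyset$ to be the main obstacle. Unlike in the $F_{n,k}$ case, where the pendant vertices $w_i'$ force $w_i\in S$ outright, here $\delta\ge 2$ means no vertex is pinned down by total domination alone, so one must lean on the $[1,2]$ upper bound, and the bookkeeping branches according to the parity of $k$ and according to whether the clique vertices of $S$ lie in $\{v_1,\dots,v_r\}$ or in $\{v_{r+1},\dots,v_{n-k}\}$; obtaining a clean contradiction in every branch, especially when $S$ contains two clique vertices, is the delicate point, and one must be careful that the vertices $w,w',w''$ (for odd $k$) really do create the needed obstruction.
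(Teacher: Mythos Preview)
Your plan is correct and follows essentially the same route as the paper: verify that $W=\{w_1,\dots,w_r,w_1',\dots,w_r'\}\cup V'$ is a total $[1,2]$-set giving the upper bound, then show $S\cap V(K_{n-k})=\emptyset$ for any minimum total $[1,2]$-set $S$ by first bounding $|S\cap V(K_{n-k})|\le 2$ and then eliminating the cases $|S\cap V(K_{n-k})|=2$ and $=1$ via the gadget-forcing observation (if $v_i,v_{i+1}\notin S$ then $w_i,w_i'\in S$; if $S\cap\{v_{r+1},\dots,v_{n-k}\}=\emptyset$ then $V'\subseteq S$), branching exactly on whether the clique vertices of $S$ lie among $v_1,\dots,v_r$ or beyond, and using $r\ge 3$ for the pigeonhole step. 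One small slip: to guarantee an ``other'' clique vertex when three lie in $S$ you need $n-k\ge 4$, not $n-k\ge 3$; but in fact $k\le\frac{2n}{3}-1$ forces $n-k\ge\frac{k+3}{2}\ge\frac{11}{2}$, so $n-k\ge 6$ and the point is moot.
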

\begin{proof} Note that the order of the graph $H_{n,k}$ as
constructed above is
$n$ and $$S'=\left\{
\begin{array}{ll}
\{w_1, \ldots, w_r\}\cup\{w_1',
\ldots, w_r'\}\cup \{w, w'\}, & \mbox{if $k=2r+2$}\\
\{w_1, \ldots, w_r\}\cup\{w_1', \ldots, w_r'\}\cup \{w, w', w''\}, &
\mbox{if $k=2r+3$},\\
\end{array}
\right.
$$ is a total $[1,2]$-set of $H_{n,k}$. So, $\gamma_{t[1,2]}(H_{n,k})\leq k$.
In addition, any proper subset of $S'$ is not a total $[1,2]$-set of $H_{n,k}$.
Let $S$ be a $\gamma_{t[1,2]}$-set of $H_{n,k}$.
Note first that $|S\cap V(K_{n-k})|\leq 2$ since $S$ is a $[1,2]$-set of $H_{n,k}$.
Our proof is based on the fact that: if $v_i\notin S$ and $v_{i+1}\notin S$
for $1\leq i\leq r-1$, then $\{w_i, w_{i}'\}\subseteq  S$ since $N(w_i)=\{v_i,w_i'\}$, $N(w_i')=\{w_i,v_{i+1}\}$ and
$S$ is a total $[1,2]$-set; and similarly if $S\cap \{v_{r+1},v_{r+2},\ldots,v_{n-k}\}=\emptyset$,
then $\{w,w'\}\subseteq S$ if $k=2r+2$ or $\{w,w',w''\}\subseteq S$ if $k=2r+3$.

\vspace{3mm}\noindent{\bf Claim 1.} \  $|S\cap V(K_{n-k})|\leq 1$.

\vspace{3mm} By contradiction, suppose that $|S\cap V(K_{n-k})|=2$
and $v_l,v_j\in S$. If $|S\cap\{v_1,\ldots,v_r\}|\leq 1$, there
exists $1\leq i\leq r$ such that $v_i,v_{i+1}\not\in S$ since $r\geq
3$, then $\{w_i,w_i'\}\subseteq  S$. But now,
$\{w_i,v_l,v_j\}\subseteq N(v_i)\cap S$. This contradicts to the
fact that $S$ is a $[1,2]$-set. So we may assume $|S\cap
\{v_1,\ldots, v_r\}|=2$. Then $w\in S$ since $\{v_i\ |\ r+1\leq
i\leq n-k\}\cap S=\emptyset$, which means $\{w,v_l,v_j\}\subseteq
N(v_{r+1})\cap S$, a contradiction.

\vspace{3mm}\noindent{\bf Claim 2.}\ $S\cap V(K_{n-k})=\emptyset$.

\vspace{3mm} By contradiction, suppose that $|S\cap V(K_{n-k})|= 1$
and $v_l\in S\cap V(K_{n-k})$. If $1\leq l\leq r$, then
$\{w,w'\}\subseteq S$, and then $\{w,w',v_l\}\subseteq
N(v_{r+1})\cap S$ if $k=2r+2$ or $\{w,w'',v_l\}\subseteq
N(v_{r+3})\cap S$ if $k=2r+3$, both contradicting to the fact that
$S$ is a $[1,2]$-set. If $r+1\leq l\leq n-k$, then
$\{w_1,w_1',w_2,w_2'\}\subseteq S$, so $\{w_1',w_2,v_l\}\subseteq
N(v_2)\cap S$, a contradiction.

It follows immediately that $S=S'$ and thus $\gamma_{t[1,2]}(H_{n,k})=k$.
\end{proof}

\section{ Graphs with no total $[1,2]$-set}
As we have seen in the previous section, there exist many graphs
with no total $[1,2]$-set, we summarize it as follows.

\begin{theorem}{\label{31}} Let $G$ be a connected graph of order $n$.

(1) If  $n\geq 3$ and $\gamma_{[1,2]}(G)> \frac{4n}{5}$, then
$\gamma_{t[1,2]}(G)=+\infty$;

(2) If  $n\geq 5$, $\delta(G)\geq 2$ and $\gamma_{[1,2]}(G)>
\frac{2n}{3}$, then $\gamma_{t[1,2]}(G)=+\infty$;

(3) Let $G$ be a tree of order $n$ with $k$ leaves. If
$\gamma_{[1,2]}(G)=n-k$, then $\gamma_{t[1,2]}(G)=+\infty$ unless
$G$ is a caterpillar.

\end{theorem}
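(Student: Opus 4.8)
My plan is to handle the three parts separately, since (1) and (2) are immediate consequences of the two upper bounds already proved, while (3) requires a structural argument about trees.

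For part (1): suppose, for contradiction, that $\gamma_{t[1,2]}(G)<+\infty$. Then Theorem 2.3 applies (using $n\ge 5$; for $3\le n\le 4$ one checks the few small connected graphs directly, or notes the hypothesis $\gamma_{[1,2]}(G)>\frac{4n}{5}$ forces a small case that can be inspected), giving $\gamma_{t[1,2]}(G)\le \frac{4n}{5}$. But $\gamma_{[1,2]}(G)\le \gamma_{t[1,2]}(G)$ always holds, as noted in the introduction, so $\gamma_{[1,2]}(G)\le \frac{4n}{5}$, contradicting the hypothesis. Part (2) is identical in structure, invoking Theorem 2.5 in place of Theorem 2.3 and using $\delta(G)\ge 2$: if $\gamma_{t[1,2]}(G)<+\infty$ then $\gamma_{[1,2]}(G)\le\gamma_{t[1,2]}(G)\le\frac{2n}{3}$, a contradiction.

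For part (3): assume $G$ is a tree of order $n$ with $k$ leaves and $\gamma_{[1,2]}(G)=n-k$. I would first observe that $n-k$ is the number of non-leaf (internal) vertices of $G$, so the set $I$ of internal vertices is a $\gamma_{[1,2]}(G)$-set (it is certainly a dominating set of a tree, and one should check it is a $[1,2]$-set here, or rather argue from minimality that any $\gamma_{[1,2]}$-set must essentially coincide with $I$). Now suppose $\gamma_{t[1,2]}(G)<+\infty$ and take a total $[1,2]$-set $S$; since $S$ is a total dominating set of a tree it must contain every support vertex, and more generally $S$ must dominate every leaf, which pins $S$ close to $I$. The idea is that the minimality/extremality condition $\gamma_{[1,2]}(G)=n-k$ is very restrictive: it essentially says the internal vertices are "tight" for $[1,2]$-domination, and combining this with the existence of a total $[1,2]$-set should force the internal tree $G[I]$ to be a path — i.e., $G$ is a caterpillar. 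Concretely, if $G$ is not a caterpillar then $G[I]$ contains a vertex $v$ of degree $\ge 3$ in $G[I]$; I would show that any total $[1,2]$-set $S$, being forced to contain $v$ together with neighbors needed for total domination of the branches and leaves at $v$, makes some vertex see at least three members of $S$, or else that dropping a vertex from $I$ contradicts $\gamma_{[1,2]}(G)=n-k$. The cleanest route is probably: derive from $\gamma_{[1,2]}(G)=n-k$ that every internal vertex has at most... (a degree/structure constraint on how leaves attach), then show a total $[1,2]$-set cannot accommodate a degree-$\ge 3$ internal vertex without a violation.

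The main obstacle will be part (3): making precise the interaction between the hypothesis $\gamma_{[1,2]}(G)=n-k$ and the non-existence argument for total $[1,2]$-sets when $G[I]$ has a vertex of degree at least three. I expect to need a small lemma characterizing trees with $\gamma_{[1,2]}(G)=n-k$ (this "extremal equality" condition likely already appeared in \cite{wu} or can be reconstructed), and then a careful local analysis at a high-degree internal vertex showing that the total $[1,2]$-constraint ($1\le|N(v)\cap S|\le 2$ for \emph{every} vertex, internal ones included) is incompatible with both the forced inclusions coming from total domination of leaves and the extremality of $I$. Parts (1) and (2) should be two or three lines each.
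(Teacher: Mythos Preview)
Your treatment of parts (1) and (2) matches the paper's: both are immediate contrapositives of Theorems~\ref{21} and~\ref{22} together with $\gamma_{[1,2]}(G)\le\gamma_{t[1,2]}(G)$.

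For part (3) your direction is right, but you are planning a harder argument than needed and have not identified the key step. The paper does not do a local analysis at a high-degree internal vertex. Instead, it argues as follows: take a total $[1,2]$-set $S$ and set $S'=S\setminus L$, where $L$ is the set of leaves. The point you are missing is that $S'$ is still a $[1,2]$-set of $T$ (if a leaf lies in $S$, its support vertex is already in $S$ by total domination, so deleting the leaf from $S$ loses nothing for domination, and the $[1,2]$ upper constraint on vertices outside $S'$ only improves). Since $S'\subseteq V(T)\setminus L$ has at most $n-k$ vertices and $\gamma_{[1,2]}(T)=n-k$, in fact $S'=I$ exactly. Now the total $[1,2]$ condition gives $\Delta(T[S])\le 2$, hence $\Delta(T[I])=\Delta(T[S'])\le\Delta(T[S])\le 2$; as $T[I]$ is connected, it is a path, so $T$ is a caterpillar.

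So the ``small lemma characterizing trees with $\gamma_{[1,2]}(G)=n-k$'' you anticipate needing is unnecessary, and the ``careful local analysis at a high-degree internal vertex'' collapses to the one-line observation $\Delta(T[I])\le\Delta(T[S])\le 2$ once you know $I\subseteq S$ (equivalently $S'=I$). Your plan would likely succeed, but it treats as an obstacle what is actually a two-line argument once you strip the leaves from $S$ first.
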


\begin{proof} (1) and (2) are immediate from Theorem \ref{21} and \ref{22}.

(3) Assume that $\gamma_{t[1,2]}(T)$ exists and $S$ is a
$\gamma_{t[1,2]}(T)$-set of $T$. Let $L$ be the set of all leaves of
$T$. If $v\in L\cap S$, the support vertex of $v$ also lies in $S$.
It follows that $S'=S\setminus L$ is also a $[1,2]$-set of $T$ and
hence $|S'|\leq n-k$. Since $\gamma_{[1,2]}(T)=n-k$, we have
$|S'|=n-k$ and $V(T)\setminus S'=L$. If there exists a vertex $v\in
S'$ such that $N(v)\subseteq S'$, then $S'\setminus \{v\}$ is
 a $[1,2]$-set of $T$ with cardinality less than $n-k$, a contradiction.
 So each vertex in $S'$ must have at least one neighbor outside $S'$, which means all the vertices of $S'$ are support vertices of $T$.
It is easy to see $\Delta(T[S'])\leq \Delta(T[S])\leq 2$. So $T[S']$
must be a path and then $T$ is a caterpillar.
\end{proof}

Let $p$ and $k$ be two integers with $p\geq k+2\geq 5$, $G_{p,k}$ is
a graph obtained from a complete graph $K_p$ as follows: for every
$k$-element subsets $S$ of the vertices set $V(K_p)$, we add a new
vertex $x_s$ and the edges $x_su$ for all $u\in S$. In \cite{wu},
Yang and Wu proved that $\gamma_{[1,2]}(G_{p,k})=|V(G_{p,k})|$. So,
it is immediate from Theorem \ref{31} that

\begin{corollary}
If $p\geq k+2$ and $k\geq 3$, then
$\gamma_{t[1,2]}(G_{p,k})=+\infty$.
\end{corollary}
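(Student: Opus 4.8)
The plan is to obtain this as an immediate consequence of Theorem~\ref{31}(1) together with the cited result of Yang and Wu. First I would record the two preconditions needed to apply Theorem~\ref{31}(1) to $G=G_{p,k}$. Its order is $n=|V(G_{p,k})|=p+\binom{p}{k}$, and since $p\geq k+2\geq 5$ we have $n\geq p\geq 5>3$, so the order hypothesis $n\geq 3$ holds. Moreover $G_{p,k}$ is connected: it is built from the connected graph $K_p$ by adjoining, for each $k$-element subset $S\subseteq V(K_p)$, a vertex $x_S$ joined to the $k\geq 3$ vertices of $S$, so every added vertex has a neighbour in the connected part $K_p$.

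Next I would invoke the theorem of Yang and Wu \cite{wu}, which gives $\gamma_{[1,2]}(G_{p,k})=|V(G_{p,k})|=n$. Since trivially $n>\frac{4n}{5}$, the hypothesis $\gamma_{[1,2]}(G)>\frac{4n}{5}$ of Theorem~\ref{31}(1) is met. Applying that part of the theorem yields $\gamma_{t[1,2]}(G_{p,k})=+\infty$, which is precisely the assertion of the corollary.

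There is essentially no obstacle here: the substance is carried entirely by Theorem~\ref{31}(1) and by the external equality $\gamma_{[1,2]}(G_{p,k})=n$. The only point requiring care is the bookkeeping of hypotheses, namely checking that the parameter range $p\geq k+2$, $k\geq 3$ of the corollary coincides with the range $p\geq k+2\geq 5$ in which $G_{p,k}$ was defined and in which Yang and Wu established $\gamma_{[1,2]}(G_{p,k})=|V(G_{p,k})|$, and confirming connectivity and $n\geq 3$ so that Theorem~\ref{31}(1) genuinely applies.
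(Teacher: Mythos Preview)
Your proof is correct and follows exactly the route the paper takes: it derives the corollary as an immediate consequence of Theorem~\ref{31}(1) together with the cited result of Yang and Wu that $\gamma_{[1,2]}(G_{p,k})=|V(G_{p,k})|$. You have been slightly more careful than the paper in recording connectivity and the order hypothesis, but the argument is identical in substance.
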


\section{Graphs with $\gamma_{t[1,2]}(G)=\gamma_{[1,2]}(G)$ }

It was shown in \cite{che} that if $G$ is the corona $H\circ K_1$ of
a graph $H$, then $\gamma_{[1,2]}(G)=\gamma(G)$. For total $[1,2]$-domination number, we proved the following result.

\begin{theorem}
Let $G$ be the corona $H\circ K_1$ of a connected graph $H$ of order
$n\geq 2$. Then $\gamma_{t[1,2]}(G)=\gamma_{[1,2]}(G)$ if and only
if $H$ is a path or a cycle.
\end{theorem}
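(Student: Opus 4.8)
The plan is to reduce everything to a single rigid feature of coronas: each leaf forces its support vertex. Write $V(H)=\{u_1,\dots,u_n\}$ and let $\ell_i$ be the leaf of $G=H\circ K_1$ attached to $u_i$. First I would pin down $\gamma_{[1,2]}(G)$. Since $N_G[\ell_i]=\{u_i,\ell_i\}$ and these closed neighbourhoods are pairwise disjoint, any dominating set of $G$ has size at least $n$, so $\gamma(G)=n$, realised by $V(H)$; by the result of Chellali et al.\ quoted above, $\gamma_{[1,2]}(G)=\gamma(G)=n$ (alternatively, $V(H)$ is itself a $[1,2]$-set, so $n\le\gamma(G)\le\gamma_{[1,2]}(G)\le n$). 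Thus the theorem is equivalent to the assertion that $\gamma_{t[1,2]}(G)=n$ precisely when $H$ is a path or a cycle; and since $\gamma_{t[1,2]}(G)\ge\gamma_{[1,2]}(G)=n$ always, I only have to decide when a total $[1,2]$-set of size exactly $n$ exists.

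The structural core is the observation that every total dominating set $S$ of $G$ must contain all of $V(H)$: for each $i$, $\ell_i$ has the single neighbour $u_i$, so total domination of $\ell_i$ forces $u_i\in S$; in particular a total $[1,2]$-set of cardinality $n=|V(H)|$ can only be $V(H)$ itself. For the ``if'' direction I would assume $H$ is a path or a cycle, so (as $H$ is connected with $n\ge 2$) $1\le d_H(v)\le 2$ for every $v\in V(H)$, and then simply verify that $V(H)$ is a total $[1,2]$-set of $G$: for $v=u_i$ one has $|N_G(u_i)\cap V(H)|=d_H(u_i)\in\{1,2\}$, and for $v=\ell_i$ one has $|N_G(\ell_i)\cap V(H)|=1$. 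Hence $\gamma_{t[1,2]}(G)\le n$, and with the lower bound we get $\gamma_{t[1,2]}(G)=n=\gamma_{[1,2]}(G)$.

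For the ``only if'' direction I would argue contrapositively: if $H$ is neither a path nor a cycle then, $H$ being connected, $\Delta(H)\ge 3$; fix $w\in V(H)$ with $d_H(w)\ge 3$. Any total $[1,2]$-set $S$ of $G$ contains $V(H)\supseteq N_H(w)$ by the observation above, whence $|N_G(w)\cap S|\ge d_H(w)\ge 3$, violating the upper part of the $[1,2]$ condition. So $G$ has no total $[1,2]$-set at all, i.e.\ $\gamma_{t[1,2]}(G)=+\infty\neq n=\gamma_{[1,2]}(G)$. I do not anticipate a genuine obstacle: the argument is a short forcing argument, and the two load-bearing steps are (a) establishing $\gamma_{[1,2]}(G)=n$ and (b) the ``support vertices are forced'' observation. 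The only point needing mild care is the bookkeeping with the $+\infty$ convention in the reverse direction, which the forcing argument handles automatically by ruling out \emph{any} total $[1,2]$-set rather than merely excluding the value $n$.
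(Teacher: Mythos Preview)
Your proof is correct and follows essentially the same approach as the paper: both arguments hinge on the forcing observation that every total dominating set of $G$ must contain all of $V(H)$, then use $\Delta(H)\le 2$ as the characterising condition. Your write-up is slightly more explicit than the paper's in two minor respects---you compute $\gamma_{[1,2]}(G)=n$ up front rather than relying tacitly on the cited result of Chellali et al., and you frame the necessity direction contrapositively, which cleanly dispatches the $+\infty$ case---but these are presentational choices, not substantive differences.
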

\begin{proof} To show the sufficiency,
let $S$ be a $\gamma_{t[1,2]}(G)$-set of $G$. Since $S$ is a
dominating set, for a leaf of $G$, either itself is contained in $S$
or its support vertex contained in $S$. Moreover, By the definition
of corona, for two leaves, their support vertices are different. So,
$|S|\geq n$. On the other hand, since $H$ is a path or cycle, $V(H)$
is a total $[1,2]$-set of $G$, $|S|\leq |V(H)|=n$. This proves the
sufficiency.

To show the necessity, we assume that $\gamma_{t[1,2]}(H\circ
K_1)=\gamma_{[1,2]}(H\circ K_1)$, and let $S$ be a
$\gamma_{t[1,2]}(G)$-set of $G$. Obviously, all the support vertices
must lie in $S$, which means $V(H)\subseteq S$. On the other hand,
$\Delta(G[S])\leq 2$, implying that $\Delta(H)\leq 2$. Moreover,
since $H$ is connected, $H\cong P_n$ or $H\cong C_n$.

\end{proof}

\begin{theorem}(\cite{bollo})\label{52}
If $G$ is a graph without isolated vertices, then $G$ has a minimum
dominating set $D$ such that, for all $v\in D$, $pri(v, D)\cap
(V(G)\setminus D)\neq \emptyset$.
\end{theorem}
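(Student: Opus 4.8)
The plan is to take an arbitrary minimum dominating set $D$ of $G$ and modify it, vertex by vertex, into a minimum dominating set with the stated private-neighbor property. First I would call a vertex $v\in D$ \emph{bad} if $pri(v,D)\cap(V(G)\setminus D)=\emptyset$; equivalently, every vertex of $N(v)\setminus D$ has another neighbor in $D$, and also (if $v$'s only private neighbor were $v$ itself) $v$ has a neighbor in $D$. The key observation is that if $v$ is bad and $v$ has a neighbor $u\in N(v)\setminus D$, then $u$ itself is dominated by some other vertex of $D$, so swapping $v$ out for $u$ — i.e. replacing $D$ by $D'=(D\setminus\{v\})\cup\{u\}$ — still yields a dominating set of the same cardinality: everything $v$ used to dominate (other than possibly $v$ itself) is either still dominated by the rest of $D$, or is now dominated by $u$; and $v$ becomes dominated by $u$. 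The only case where $v$ has \emph{no} neighbor outside $D$ is when $N[v]\subseteq D$, but then $v$ has a neighbor in $D$ (as $G$ has no isolated vertices), so $D\setminus\{v\}$ is already a smaller dominating set, contradicting minimality; hence this case does not arise.

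The main work is to show this swapping process terminates at a good set, and for that I would choose a clever potential function rather than argue termination directly, since a naive swap could conceivably create new bad vertices. The cleanest route: among all minimum dominating sets $D$ of $G$, pick one for which $|pri(D,D)\setminus D|=\sum_{v\in D}|pri(v,D)\setminus D|$ is \emph{maximum} (equivalently, the number of vertices of $V(G)\setminus D$ with a unique neighbor in $D$ is maximum; this is a finite quantity bounded by $n$, so the maximum is attained). I claim such a $D$ has no bad vertex. Suppose $v\in D$ is bad; by the previous paragraph pick $u\in N(v)\setminus D$ and form $D'=(D\setminus\{v\})\cup\{u\}$, a minimum dominating set. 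The hard part — and the step I expect to be the main obstacle — is verifying that passing from $D$ to $D'$ strictly increases the potential, so that the maximality of $D$ is contradicted. Here one argues that $u$ is a new private neighbor witness — indeed $v\in pri(u,D')\setminus D'$ unless $v$ has another neighbor in $D'$; and a careful case check (distinguishing whether $v$ had a neighbor in $D$, and tracking how the private neighborhoods of the \emph{other} vertices of $D$ change when $v$ is deleted and $u$ inserted) shows the count of uniquely-dominated outside vertices does not drop and in fact goes up by at least one. This bookkeeping is the delicate point: inserting $u$ could destroy the private status of a neighbor of $u$ in $V(G)\setminus D$, so one must show the gain from $v$ (and from former private neighbors of $v$ that $u$ now uniquely dominates) outweighs any such loss.

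Once the claim is established, the theorem follows immediately: the extremal $D$ is a minimum dominating set with $pri(v,D)\cap(V(G)\setminus D)\neq\emptyset$ for every $v\in D$, as required. If the potential-function argument proves too fragile in the case analysis, a fallback is the classical approach of Bollob\'as and Cockayne: order the swaps so that one repeatedly picks a bad vertex $v$ and a \emph{suitably chosen} outside neighbor $u$ (for instance one that is itself dominated by a vertex outside $N[v]$, if possible), and show via a minimum-counterexample argument on $|V(G)|$ that the process cannot cycle. Either way the substance of the proof is the single swap lemma plus a monotonicity/termination argument, and I would present the potential-function version as the primary line since it avoids any explicit induction on the graph.
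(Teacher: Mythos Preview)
The paper does not prove this statement; it is quoted from Bollob\'as and Cockayne and used as a black box in the proof of Theorem~\ref{54}. So there is nothing in the paper to compare against, and I will just comment on your argument.

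Your swap idea is exactly the heart of the classical proof, but you are missing one short observation that makes termination trivial and renders your ``delicate bookkeeping'' unnecessary. You only rule out the case $N[v]\subseteq D$; in fact the same contradiction arises whenever a bad vertex $v$ has \emph{any} neighbor in $D$. Indeed, if $v$ is bad then every vertex of $N(v)\setminus D$ already has a second neighbor in $D$, so $D\setminus\{v\}$ dominates $V(G)\setminus\{v\}$; it dominates $v$ too as soon as $N(v)\cap D\neq\emptyset$, contradicting minimality of $D$. Hence every bad vertex is \emph{isolated} in $G[D]$.

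With this in hand, replace your potential by the number of isolated vertices of $G[D]$ (equivalently, maximize $|E(G[D])|$). If $v$ is bad, pick any $u\in N(v)\subseteq V(G)\setminus D$ and swap. Since $v$ was bad, $u$ has a second $D$-neighbor in $D\setminus\{v\}$, so $u$ is not isolated in $G[D']$; and no $w\in D\setminus\{v\}$ becomes isolated, because $v$ (being isolated in $G[D]$) was never a neighbor of $w$ there. Thus the isolated count strictly drops, and a $D$ minimizing it has no bad vertex. This is precisely the Bollob\'as--Cockayne argument. Your proposed potential $\sum_{v\in D}|pri(v,D)\setminus D|$, by contrast, is not obviously monotone: inserting $u$ can destroy the unique-domination status of several outside neighbors of $u$, and the single guaranteed gain ($v\in pri(u,D')$) need not compensate. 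You were right to flag that step as the obstacle; the fix is not a more careful case analysis but the simpler potential above, unlocked by the isolation observation.
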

Using the above result, Chellali et al. \cite{che} gave the
following sufficient condition for a graph $G$ satisfying
$\gamma(G)=\gamma_{[1,2]}(G)$.

\begin{theorem}(\cite{che})\label{53}
If $G$ is a $P_4$-free graph, then $\gamma(G)=\gamma_{[1,2]}(G)$.
\end{theorem}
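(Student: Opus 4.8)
The plan is to prove the inequality $\gamma_{[1,2]}(G)\le\gamma(G)$ (the reverse inequality $\gamma(G)\le\gamma_{[1,2]}(G)$ holds for every graph), and the observation driving everything is the following triviality: if a graph $G$ has a minimum dominating set $D$ with $|D|\le 2$, then $D$ is automatically a $[1,2]$-set, since every vertex outside $D$ has at least one neighbour in $D$ (domination) and trivially at most $|D|\le 2$. Hence it suffices to show that a connected $P_4$-free graph has domination number at most $2$. The reduction to the connected case is harmless: both $\gamma$ and $\gamma_{[1,2]}$ are additive over connected components, every component of a $P_4$-free graph is again $P_4$-free, and a one-vertex graph trivially satisfies $\gamma=\gamma_{[1,2]}=1$.

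For the core claim I would invoke the classical structure of $P_4$-free graphs (cographs): a connected $P_4$-free graph $G$ on at least two vertices is the join $G=G_1+G_2$ of two non-empty ($P_4$-free) graphs, equivalently, the complement of a connected cograph is disconnected. Given such a decomposition, choose any $u\in V(G_1)$ and $w\in V(G_2)$; by the definition of the join, $u$ is adjacent to every vertex of $G_2$ and $w$ to every vertex of $G_1$, so $\{u,w\}$ dominates $V(G)$ and $\gamma(G)\le 2$. Combined with the observation of the previous paragraph, this gives $\gamma_{[1,2]}(G)\le\gamma(G)$, hence equality.

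The result is not deep, so there is no serious obstacle; the only real choice is whether to accept the cograph structure theorem as a black box or to argue more self-containedly. A self-contained route matching the setup of Theorem~\ref{52} would be: take a minimum dominating set $D$ in which every $v\in D$ has an external private neighbour $v'$ with $N[v']\cap D=\{v\}$ (isolated vertices, if any, lie in $D$ and are irrelevant to the $[1,2]$ condition). If $D$ failed to be a $[1,2]$-set, some $u\notin D$ would have three neighbours $a,b,c\in D$; their private neighbours $a',b',c'$ are pairwise distinct and distinct from $u$, and one then produces an induced $P_4$ inside $\{u,a,b,c,a',b',c'\}$ — the base case being the induced path $a'\!-\!a\!-\!u\!-\!b$ available whenever $a'\not\sim u$ and $a\not\sim b$ (note $a'\not\sim b$ is automatic), with the remaining configurations disposed of by symmetry and one more such path. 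I expect this short, bounded case analysis, rather than any conceptual difficulty, to be the only mildly tedious part, which is precisely why the join-decomposition argument is preferable.
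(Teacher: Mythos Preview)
Your proposal is correct. Both of your routes work; the second one (take a minimum dominating set $D$ with external private neighbours, via Theorem~\ref{52}, and find an induced $P_4$ whenever some $u\notin D$ sees three vertices of $D$) is exactly what the paper does---indeed the paper states Theorem~\ref{53} without proof and then establishes the stronger Theorem~\ref{54} by precisely this case analysis.

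Your primary approach, however, is genuinely different and cleaner. You use the cograph structure theorem to conclude that every connected $P_4$-free graph on at least two vertices is a join $G_1+G_2$, whence $\gamma(G)\le 2$ and so any $\gamma$-set is automatically a $[1,2]$-set. This is shorter and yields the sharper fact $\gamma(G)\le 2$ for connected cographs, but it imports the join decomposition as a black box. The paper's case-analysis route is more self-contained and, crucially, proves more: it shows that \emph{every} minimum dominating set with the external-private-neighbour property is already a $[1,2]$-set (Theorem~\ref{54}), a statement about specific sets rather than just the parameter. That refinement is then exploited in the proof of the next theorem, where such a $D$ is taken as the starting point and further massaged into a total $[1,2]$-set. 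Your join argument gives the numerical equality but not this structural conclusion.
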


By slightly simplying the proof of the above theorem, we get the
following stronger result.

\begin{theorem}\label{54}
Let $G$ be a $P_4$-free graph without isolated vertices. If $D$ is a
minimum dominating set of $G$ such that $pri(v, D)\cap
(V(G)\setminus D)\neq \emptyset$  for all $v\in D$, then $D$ is also
a $[1,2]$-set of $G$.
\end{theorem}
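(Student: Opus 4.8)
The plan is to take the minimum dominating set $D$ guaranteed by the hypothesis and show directly that no vertex $v \in V(G)\setminus D$ can have three or more neighbors in $D$, using the $P_4$-freeness of $G$ together with the private-neighbor property of $D$. First I would set up a proof by contradiction: suppose $v \in V(G)\setminus D$ has three distinct neighbors $x, y, z \in D$. For each of $x, y, z$ the hypothesis gives an external private neighbor; pick $x' \in pri(x,D)\cap(V(G)\setminus D)$, and similarly $y'$, $z'$. By the definition of private neighbor, $x'$ is adjacent to $x$ but to no other vertex of $D$; in particular $x'$ is not adjacent to $y$ or to $z$, and (since $v\in D$? no --- $v\notin D$) one must be a little careful here: $x'$ could a priori equal $v$ or be adjacent to $v$. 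The key observation is that $x' \neq v$ because $v$ is adjacent to $y \in D$ while $x'$ is not, and likewise $x', y', z'$ are pairwise distinct since each has a different unique neighbor in $D$.

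Next I would extract a forbidden $P_4$. Consider the path $x' - x - v - y$: the edges $x'x$, $xv$, $vy$ all exist. For this to fail to be an induced $P_4$ we would need a chord among $\{x', x, v, y\}$; the possibilities are $x'v$, $x'y$, or $xy$. We have $x'y \notin E(G)$ since $x'$'s only $D$-neighbor is $x$. If $xy \notin E(G)$ as well, then the only way out is $x'v \in E(G)$. So either $xy\in E(G)$ or $x'v\in E(G)$ --- and here I would play the three neighbors against each other: running the same argument with the pair $(x,z)$ and with $(y,z)$, I get that for every pair among $\{x,y,z\}$, either that pair is adjacent in $G[D]$ or the corresponding private neighbor is adjacent to $v$. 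The main obstacle, and the heart of the argument, is to show these escape routes cannot all be taken simultaneously: if, say, $x'v, y'v, z'v$ all hold, then $v$ together with $x', y', z'$ (or $v, x', y', x$ etc.) produces an induced $P_4$ a different way --- e.g. $x' - v - y'$ extended by $y$, i.e. the path $x - x' - v - y'$, whose potential chords $x x'$ (exists, fine, wait that's an edge of the path)... I would instead use $z - x' $: actually the cleanest contradiction is to look at $x' - v - y'$ and note $x'y' \notin E$, then append $x$ to get $x - x' - v - y'$, which is induced unless $xv$ or $xy'$ is an edge; $xv \in E$ indeed, so that path has a chord. This shows the bookkeeping needs care, and the right move is probably to choose, among $x,y,z$, two whose private neighbors are \emph{not} adjacent to $v$, reducing to exactly the $x'-x-v-y$ path with the only surviving chord being $xy$, and then to derive a contradiction from $G[D]$ being forced to contain too much structure (ultimately contradicting minimality of $D$, since a vertex of $D$ all of whose neighbors lie in $D\cup\{v\}$-type situations could be removed).

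Concretely, here is the step order I would follow. Step 1: fix $D$ as in the hypothesis and, for each $w\in D$, fix a private neighbor $w^{*}\in pri(w,D)\setminus D$. Step 2: assume $v\notin D$ has $\geq 3$ neighbors in $D$, say $x,y,z$, and record that $x^{*},y^{*},z^{*}$ are pairwise distinct, each adjacent in $G$ to exactly one of $x,y,z$ among all of $D$, and each possibly but not necessarily adjacent to $v$. Step 3: if two of $x^{*},y^{*},z^{*}$ --- WLOG $x^{*},y^{*}$ --- are both non-adjacent to $v$, then $x^{*}xvy$ is a path; its chords can only be $xy$, so $xy\in E(G)$; symmetrically handle the remaining structure to show $G[\{x,y,z\}\cup\{v\}]$ contains a $P_4$ or that $D$ fails minimality. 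Step 4: the remaining case is that at least two of the private neighbors are adjacent to $v$, say $x^{*}v,y^{*}v\in E(G)$; then $x^{*}vy^{*}$ is an induced $P_3$ (as $x^{*}y^{*}\notin E$), and appending $x$ gives $x\,x^{*}\,v\,y^{*}$; since $xv\in E$ this forces $xy^{*}\in E$ or... --- at this point, I expect to need one more private neighbor or one more vertex of $D$ to close the loop, and the argument will conclude that in every case a $P_4$ is induced, contradicting that $G$ is $P_4$-free. The hard part is precisely Step 3/Step 4: organizing the case analysis so that every configuration of ``adjacent in $G[D]$'' versus ``private neighbor adjacent to $v$'' yields an honest induced $P_4$, without an unnoticed chord slipping in. Once the contradiction is reached, $|N(v)\cap D|\leq 2$ for all $v\notin D$, so $D$ is a $[1,2]$-set, which is what we want.
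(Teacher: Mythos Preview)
Your setup is right and matches the paper: assume some $v\in V\setminus D$ has three neighbours $x,y,z\in D$, fix external private neighbours $x^{*},y^{*},z^{*}$, and hunt for an induced $P_4$. Your Step~3 (some private neighbour not adjacent to $v$) is essentially the paper's Case~1, and that part can indeed be pushed through with only the seven vertices $v,x,y,z,x^{*},y^{*},z^{*}$.

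The genuine gap is Step~4. When \emph{all} private neighbours of $x,y,z$ lie in $N(v)$, you cannot manufacture an induced $P_4$ inside $\{v,x,y,z,x^{*},y^{*},z^{*}\}$ at all. Concretely, take the graph on these seven vertices with edge set $\{vx,vy,vz,vx^{*},vy^{*},vz^{*},xx^{*},yy^{*},zz^{*}\}$ and nothing else: every $3$-edge path meets $v$ in an interior position and hence has a chord through $v$, so this configuration is $P_4$-free. All of your attempted extensions in Step~4 die for exactly this reason (the chords $xv$, $vy^{*}$, etc.\ that you keep running into are unavoidable). So ``one more private neighbour'' will not close the loop; you must leave the set of private neighbours entirely.

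What you are missing is the use of the \emph{minimality} of $D$ to produce new vertices. The paper observes that if $(N(y)\cap N(z))\setminus(N(v)\cup N(x))=\emptyset$, then $(D\setminus\{y,z\})\cup\{v\}$ is a smaller dominating set: every vertex whose only $D$-neighbours lie in $\{y,z\}$ is either a private neighbour (hence in $N(v)$ by the Case~2 assumption) or a common neighbour of $y$ and $z$ (hence in $N(v)\cup N(x)$ by hypothesis). So minimality yields a vertex $p\in (N(y)\cap N(z))\setminus(N(v)\cup N(x))$, and symmetrically a vertex $q\in (N(x)\cap N(y))\setminus(N(v)\cup N(z))$. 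These $p,q$ are \emph{not} private neighbours; they are exactly the extra vertices you need. From $p$ and $q$ one then forces $xy,yz,xz\in E(G)$ (else e.g.\ $\{p,y,q,x\}$ induces $P_4$) and $pq\notin E(G)$ (else $\{u,z,p,q\}$ induces $P_4$), after which $\{q,x,z,p\}$ is an induced $P_4$. You gestured at minimality in Step~3 but never deployed it where it is actually needed.
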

\begin{proof} By contradiction, suppose that $D$ is not a $[1,2]$-set of $G$.
Then there exists a vertex $u\in V\setminus D$ with at least three
neighbors $x, y, z$ say, in $D$. For convenience, we use $A_v$ to
denote the set of private neighbors of $v$ in $V\setminus D$ for a
vertex $v\in D$. Hence, $A_v\neq \emptyset$ for every $v\in \{x, y,
z\}$. We consider two cases.

\vspace{3mm}\noindent{\bf Case 1.} $A_x\cup A_y\cup A_z\not\subseteq
N(u)$.

Without loss of generality, let $z'\in A_z\setminus N(u)$. If
$xz\notin E(G)$, then $\{z', z, u, x\}$ induces a $P_4$ in $G$. So,
$xz\in E(G)$. Similarly, we have $yz\in E(G)$. Furthermore, $xy\in
E(G)$, since otherwise, $\{x', x, z, y\}$ induces a $P_4$ in $G$,
where $x'\in A_x$. To avoid a $P_4$ induced by $\{z', z, x, x'\}$
induces a $P_4$ in $G$, we have $x'z'\in E(G)$. But, $\{y, x, x',
z'\}$ will induces a $P_4$ in $G$, a contradiction.

\vspace{3mm}\noindent{\bf Case 2.} $A_x\cup A_y\cup A_z\subseteq
N(z)$.

Note that $(N(y)\cap N(z))\setminus (N(u)\cup N(x))\neq \emptyset$.
If this is not, then $(D\setminus \{y,z\})\cup \{u\}$ is a
dominating set of $G$ with cardinality less than $|D|$, a
contradiction. By a similar argument to the above, we have
$(N(x)\cap N(y))\setminus (N(u)\cup N(z))\neq \emptyset$. Take $p\in
(N(y)\cap N(z))\setminus (N(u)\cup N(x))$ and $q\in (N(x)\cap
N(y))\setminus (N(u)\cup N(z))$ respectively.

If $pq\in E(G)$, then each of $\{u,z,p,q\}$ induces $P_4$ in $G$.
So, $pq\notin E(G)$. If $xy\notin E(G)$, then $pyqx$ induces a
$P_4$; if $yz\notin E(G)$, then $qypz$ induces a $P_4$ in $G$. So,
it follows that $xy\in E(G)$ and $yz\in E(G)$. By a similar argument
to the above, one has $xz\in E(G)$. But, then $\{q,x,z,p\}$ induces
$P_4$, a contradiction.

Consequently, $D$ is a $[1,2]$-set and the theorem is proved.
\end{proof}

\begin{theorem}
If $G$ is a connected $P_4$-free graph of order $n\geq 4$, then
$$\gamma_{t[1,2]}(G)=\left \{
\begin{array}{ll}
2, & \mbox{if $\Delta(G)=n-1$} \\
\gamma_{[1,2]}(G)=\gamma(G), & \mbox{if $\Delta(G)<n-1$.} \\
\end{array}
\right.
$$
\end{theorem}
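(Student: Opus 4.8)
The plan is to split along the two cases exactly as the statement does. First suppose $\Delta(G)=n-1$, and let $v$ be a vertex with $N(v)=V(G)\setminus\{v\}$. Since $n\geq 4$ and $G$ is $P_4$-free and connected, $v$ has a neighbor $u$; I want $\{v,u\}$ to be a total $[1,2]$-set. It is a total dominating set because $v$ dominates everything, $u$ is dominated by $v$, and $v$ is dominated by $u$. The remaining obligation is the upper-bound condition $|N(w)\cap\{v,u\}|\leq 2$ for every $w$, which is automatic since $|\{v,u\}|=2$. Hence $\gamma_{t[1,2]}(G)\leq 2$, and Lemma 2.1 (or the remark after it) gives $\gamma_{t[1,2]}(G)\geq 2$, so equality holds. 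Actually one should be slightly careful: we merely need \emph{some} adjacent pair that dominates; here $v$ already dominates, so any neighbor $u$ works, and such $u$ exists because $n\geq 2$.

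Now suppose $\Delta(G)<n-1$. By Theorem~\ref{53} we already have $\gamma(G)=\gamma_{[1,2]}(G)$, and by the chain of inequalities in the introduction, $\gamma(G)\leq\gamma_t(G)\leq\gamma_{t[1,2]}(G)$, so it suffices to exhibit a total $[1,2]$-set of size $\gamma(G)$. Take $D$ to be a minimum dominating set of $G$ furnished by Theorem~\ref{52}, so that $\mathrm{pri}(v,D)\cap(V(G)\setminus D)\neq\emptyset$ for every $v\in D$; by Theorem~\ref{54}, $D$ is also a $[1,2]$-set. The crux is to upgrade $D$ to a \emph{total} $[1,2]$-set without increasing its cardinality. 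The obstruction is a vertex $v\in D$ that is isolated in $G[D]$, i.e. $N(v)\cap D=\emptyset$. The key structural observation I would use is: in a connected $P_4$-free graph, if $v\in D$ has no neighbor in $D$, then $v$ together with its private neighbors behaves very rigidly — every private neighbor of $v$ lies in $N(v)$, and $P_4$-freeness forces strong adjacencies among $N(v)$, which will let me either absorb $v$'s role into a neighbor or show $\Delta(G)=n-1$, contradicting the case hypothesis.

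More concretely, suppose $v\in D$ is isolated in $G[D]$ and pick $v'\in\mathrm{pri}(v,D)$, so $N[v']\cap D=\{v\}$; in particular $v'\notin D$ and every neighbor of $v'$ other than $v$ lies in $V\setminus D$. If $G[D]$ had another isolated vertex or $v$ had few neighbors, $P_4$-freeness (applied to paths through $v$, $v'$, a neighbor of $v$, and a vertex it dominates) would be violated. I would argue that either some neighbor $w$ of $v$ can replace the membership of $v$ — i.e. $(D\setminus\{v\})\cup\{w\}$ remains a dominating set of the same size, now with $v$'s former private structure dominated and $w$ adjacent to something in the set — or else $v$ is adjacent to all of $V(G)\setminus\{v\}$, i.e. $d(v)=n-1$, which is excluded. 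Iterating this swap over all isolated vertices of $G[D]$ (checking it does not create new isolated vertices, which again uses $P_4$-freeness to keep $G[D]$'s components of manageable shape) yields a dominating set $D'$ with $|D'|=\gamma(G)$, $\delta(G[D'])\geq 1$, and — because $|D'|=\gamma(G)=\gamma_{[1,2]}(G)$ and any dominating subset that is a $[1,2]$-set of this minimum size must have $\Delta(G[D'])\leq 2$ by the same $P_4$-free analysis — $D'$ is a total $[1,2]$-set. Therefore $\gamma_{t[1,2]}(G)\leq|D'|=\gamma(G)$, and combined with $\gamma(G)\leq\gamma_{t[1,2]}(G)$ we get $\gamma_{t[1,2]}(G)=\gamma(G)=\gamma_{[1,2]}(G)$.

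The step I expect to be the main obstacle is the swap argument making $G[D]$ have minimum degree $\geq 1$: one must show that replacing an isolated vertex $v$ by a well-chosen neighbor $w$ (a) preserves domination, (b) preserves minimality of cardinality, (c) preserves the $[1,2]$ property, and (d) does not re-create isolated vertices, all of this hinging delicately on $P_4$-freeness and on the hypothesis $\Delta(G)<n-1$. A cleaner alternative I would try first is to prove directly that every connected $P_4$-free graph with $\Delta(G)<n-1$ has a minimum dominating set inducing a subgraph with no isolated vertices, perhaps by induction on the modular/cograph decomposition of $G$ (a $P_4$-free graph is a cograph, hence is either a disjoint union or a join of smaller cographs), since the join operation is exactly what tends to produce dominating pairs and the disjoint-union case is handled componentwise.
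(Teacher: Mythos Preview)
Your handling of the case $\Delta(G)=n-1$ is fine and matches the paper. The second case, however, has two genuine gaps.

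First, your claim that a minimum $[1,2]$-set $D'$ must satisfy $\Delta(G[D'])\leq 2$ is not justified. The $[1,2]$ condition only bounds $|N(w)\cap D'|$ for $w\notin D'$; it says nothing about vertices inside $D'$. The paper proves $\Delta(G[D])\leq 2$ by a direct $P_4$ argument using the private-neighbor property: if $v\in D$ has three neighbours $v_1,v_2,v_3\in D$, pick $w\in \mathrm{pri}(v,D)$ and $w_i\in\mathrm{pri}(v_i,D)$; then avoiding induced $P_4$'s on $\{w_1,v_1,v,v_2\}$ and $\{w,v,v_2,w_2\}$ forces $v_1v_2,ww_2\in E(G)$, whereupon $\{w,w_2,v_2,v_1\}$ is an induced $P_4$. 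You need this step, and ``the same $P_4$-free analysis'' does not supply it.

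Second, the swap argument is only gestured at, and you yourself flag termination and re-isolation as obstacles. The paper avoids iteration entirely by an \emph{extremal} choice: among all minimum dominating sets $D$ with the private-neighbor property (each of which is a $[1,2]$-set by Theorem~\ref{54}), take one with $\omega(G[D])$ minimum. One then shows that if $\delta(G[D])=0$, the components of $G[D]$ are all singletons (using $P_4$-freeness to force $V(H_j)\subseteq N(v_1)$ whenever an isolated $v_1$ sees a nontrivial component $H_j$ through a common neighbour, which yields a smaller dominating set). With $D$ independent and $|D|\geq 2$, pick $w\in N(v_1)\cap N(v_2)$; $P_4$-freeness forces $\mathrm{pri}(v_1,D)\cup\mathrm{pri}(v_2,D)\subseteq N(w)$, and then either $(D\setminus\{v_1,v_2\})\cup\{w\}$ dominates (contradicting $|D|=\gamma(G)$) or $(D\setminus\{v_2\})\cup\{w\}$ has all the required properties with strictly fewer components (contradicting the choice of $D$). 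This extremal-choice device is the missing idea; your iterative swap would need exactly this kind of potential function to terminate, and once you introduce it you are essentially at the paper's proof. The cograph-decomposition alternative you mention may also work, but you have not carried it out either.
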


\begin{proof}If $\Delta(G)=n-1$, then
the result trivially holds. So, next we assume that $\Delta(G)<n-1$.
By Theorem 4.4, let $D$ be a $\gamma_{[1,2]}(G)$-set of $G$ such
that $pri(u, D)\cap (V(G)\setminus D)\neq \emptyset$ for all $u\in
D$. Among all such dominating sets, we choose one such that
$\omega(G[D])$ is as small as possible. We show that $D$ is also a
$\gamma_{t[1,2]}(G)$-set of $G$. By contradiction, suppose that $D$
is not a total $[1,2]$-set. Then, either $\delta(G[D])=0$ or
$\Delta(G[D])\geq 3$.

\vspace{3mm}\noindent{\bf Case 1.} $\Delta([D])\geq 3$.

Suppose $v\in D$ is a vertex having three neighbors $v_1, v_2, v_3$
say, in $D$, and let $w\in pri(v, D)$. $w_i\in pri(v_i, D)$ for
$i=1, 2, 3$. To avoid having a $P_4$ induced by $\{w_1,v_1,v,v_2\}$,
we have $v_1v_2\in E(G)$. Similarly, $ww_2\in E(G)$, since otherwise
$\{w,v,v_2,w_2\} $ induces a $P_4$. But then $\{w,w_2,v_2,v_1\}$
induces a $P_4$, a contradiction.

\vspace{3mm}\noindent{\bf Case 2.} $\Delta(G[D])\leq 2$ and
$\delta(G[D])=0$.

Since $\Delta([D])\leq 2$, all components of $G[D]$ are paths or
cycles. Combing this with the fact that $G$ is $P_4$-free, each
component of $G[D]$ is isomorphic to $K_1, K_2, P_3, K_3$ or $C_4$.
Without loss of generality, let $H_i$ be an isolated vertex for any
integer $i\leq k$ and $H_j$ is a path of order at least two or a
cycle for any $k+1\leq j\leq t$. Since $G$ is $P_4$-free,
$diam(G)\leq 2$, which implies that $N(x)\cap N(y)\neq \emptyset$ if
$x$ and $y$ lie two different components of $G[D]$. Without loss of
generality, let $V(H_1)=\{v_1\}$. Let $v_1wv_j$ be a path connecting
$v_1$ and a vertex $v_j\in V(H_j)$ for some $j\geq 2$. To avoid
having a $P_4$ induced by $\{v_1, w, v_j, v_j'\}$ for any $v_j'\in
N(v_j)\cap V(H_j)$, and thus $V(H_j)\subseteq N(v_1)$. But then
$D'=(D\setminus V(H_j))\cup \{w\}$ is a dominating set of $G$ with
cardinality less than $|D|$ if $|V(H_j)|\geq 2$. So, $|V(H_j)|=1$
for any $j$, i.e. each component of $G[D]$ is an isolated vertex.

Since $\Delta(G)<n-1$, $\gamma_{t[1,2]}(G)=k\geq 2$. Let $D=\{v_1,
v_2,\cdots, v_k\}$. Since $G$ is connected, $N(v_1)\cap N(v_j)\neq
\emptyset$ for some integer $j\in\{2, \ldots, k\}$. Without loss of
generality, let $j=2$. Take $w\in N(v_1)\cap N(v_2)$ and $w_i\in
pri(v_i, D)$ for $i\in\{1, 2\}$. To avoid a $P_4$, $pri(v_1, D)\cup
pri(v_2, D)\subseteq N(v)$, since otherwise, if $w_1w\not\in E(G)$,
$\{w_1,v_1,w,v_2\}$ induces a $P_4$. If $(N(v_1)\cap
N(v_2))\setminus \{w\}\subseteq N(w)$, then $(D\setminus
\{v_1,v_2\})\cup \{w\}$ is a dominating set of $G$, a contradiction.
If $(N(v_1)\cap N(v_2))\setminus \{w\}\not\subseteq N(w)$, then
$D'=(D\setminus \{v_2\})\cup \{w\}$ is a dominating set of $G$.
Since $(N(v_1)\cap N(v_2))\setminus \{w\}\not\subseteq N(w)$,
$pri(v_1,D')\neq \emptyset$; $v_2\in pri(w,D')$,
$pri(v_i,D')=pri(v_i,D)$ for $3\leq i\leq k$ and
$\omega(G[D'])<\omega(G[D])$, contradicting the choice of $D$ that
$\omega(G[D])$ is as small as possible.

\end{proof}

\section{Further research}

It is interesting that the total $[1,2]$-domination problem is
concerned with graph partition and factors. Recall that a spanning
subgraph $H$ of $G$ is called a {\it $[a, b]$-factor} if $a\leq
d_H(v)\leq b$. In particular, $H$ is called a {\it $k$-factor} of
$G$ if $H$ is a $k$-regular spanning subgraph of $G$.

\vspace{3mm} \noindent{\bf Conjecture 1.} For any cubic graph $G$ of
order $n$, $\gamma_{t[1,2]}(G)<n$.

\vspace{3mm}The statement of Conjecture 1 is equivalent to that
every cubic graph $G$ has a vertex partition $(S, V(G)\setminus S)$
such that $1\leq \delta(G[S])\leq \Delta(G[S])\leq 2$ and $1\leq
\delta(G-S)\leq \Delta(G-S)\leq 2$.

It is well-known that every regular graph has a $[1,2]$-factor (see
\cite {SU}), and so does a cubic graph. Hence Conjecture 1 asserts
the existence of $[1, 2]$ -factor with an additional property in a
cubic graph. A theorem of Petersen \cite{P} says that every even
regular graph $G$ has a 2-factor. So, we also pose the following
conjecture.

\vspace{3mm}\noindent{\bf Conjecture 2.} For any 4-regular graph $G$
of order $n$, $\gamma_{t[1,2]}(G)<n$.

\vspace{3mm} The middle levels problem, attributed to H\`{a}vel
\cite{H}, concerns the following family of graphs. Let $n, a, b$ be
integers with $0\leq a< b\leq n$. Let $G(n; a, b)$ denote the
bipartite graph whose vertices are all the $a$-element and
$b$-element subsets of an $n$-set, say $[n] =\{1, 2, \ldots, n\}$.
An $a$-element subset $A$ and a $b$-element subset $B$ are adjacent
in $G(n; a, b)$ if and only if $A\subseteq B$. So, the order of
$G(n; a, b)$ is $\binom {n} a+ \binom {n}b$. The middle levels
problem asks that for a positive integer $k$, is the graph $G(2k+1;
k, k+1)$ Hamiltonian?  It is so-named because it deals with the
central levels of the Boolean algebra $2^{[2k+1]}$. Yang and Wu
\cite{wu} proved the following theorem.

\begin{theorem} (\cite{wu}) Let $n$ and $k$ be
two integers with $n\geq k\geq 3$. If $n$ is sufficiently large with
respect to any fixed $k$, then $\gamma_{[1,2]}(G(n; k, n-k))=|V(G(n;
k, n-k))|$.
\end{theorem}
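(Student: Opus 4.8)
The plan is to prove the contrapositive form directly: assuming $n$ is large relative to $k$, no proper subset of $V(G)$, with $G:=G(n;k,n-k)$, can be a $[1,2]$-set, so $\gamma_{[1,2]}(G)=|V(G)|$ follows from the trivial upper bound. First I would fix notation: let $X$ be the family of $k$-subsets of $[n]$ and $Y$ the family of $(n-k)$-subsets, so $|X|=|Y|=\binom nk$, and $G$ is the bipartite graph on $X\cup Y$ with $AB$ an edge iff $A\subseteq B$. A one-line count shows $G$ is $d$-regular with $d=\binom{n-k}{k}$ (a $k$-set has $\binom{n-k}{n-2k}=\binom{n-k}{k}$ supersets of size $n-k$, and an $(n-k)$-set has $\binom{n-k}{k}$ subsets of size $k$). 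The two consequences of ``$n$ large'' that I would isolate at the outset are: $d\ge 3$; and $\binom nk/d=\prod_{i=0}^{k-1}\frac{n-i}{n-k-i}=1+O(k^2/n)$, so that $2d-\binom nk=d\,(1-o(1))$ and $\binom nk-d=\Theta(n^{k-1})$.

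Now suppose $S\subsetneq V(G)$ is a $[1,2]$-set; put $\mathcal A=X\setminus S$ and $\mathcal B=Y\setminus S$. If $\mathcal A=\varnothing$, then any $B\in\mathcal B$ (which is nonempty since $S\ne V(G)$) has all $d\ge 3$ of its neighbours inside $X\subseteq S$, violating $|N(B)\cap S|\le 2$; hence $\mathcal A\ne\varnothing$, and symmetrically $\mathcal B\ne\varnothing$. Picking any $A\in\mathcal A$ and using $|N(A)\cap S|\le 2$ gives $|N(A)\cap\mathcal B|\ge d-2$, whence $|\mathcal B|\ge d-2$, i.e. $|S\cap Y|\le\binom nk-d+2$; symmetrically $|S\cap X|\le\binom nk-d+2$. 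The core step is then a double count of incidences between $S\cap X$ and $Y$:
\[
|S\cap X|\cdot d=\sum_{B\in Y}\bigl|\{A\in S\cap X:A\subseteq B\}\bigr|\ \le\ 2\,|\mathcal B|+|S\cap X|\cdot|S\cap Y|\ \le\ 2\binom nk+|S\cap X|\cdot|S\cap Y| ,
\]
since each $B\in\mathcal B$ has $|\{A\in S\cap X:A\subseteq B\}|=|N(B)\cap S|\le 2$ and each $B\in S\cap Y$ contributes at most $|S\cap X|$. Rearranging, $|S\cap X|\,(d-|S\cap Y|)\le 2\binom nk$, and since $d-|S\cap Y|\ge 2d-\binom nk-2=d(1-o(1))$, this forces $|S\cap X|<3$ once $n$ is large; by symmetry $|S\cap Y|\le 2$ as well.

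To finish, I would invoke the domination condition: every $(n-k)$-set lies in $S$ or is a superset of a member of $S\cap X$, so $\binom nk\le|S\cap Y|+|N(S\cap X)|$. If $S\cap X\subseteq\{A_1,A_2\}$, then $|N(S\cap X)|\le 2d-|N(A_1)\cap N(A_2)|=2d-\binom{\,n-|A_1\cup A_2|\,}{k}\le 2d-\binom{n-2k}{k}$, the common supersets of $A_1,A_2$ numbering $\binom{n-|A_1\cup A_2|}{k}$, least when $A_1,A_2$ are disjoint; the cases $|S\cap X|\le 1$ are even weaker. Hence $\binom nk\le 2+2d-\binom{n-2k}{k}$, i.e. $2\binom{n-k}{k}-\binom{n-2k}{k}-\binom nk\ge -2$. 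But a telescoping of binomial differences gives
\[
2\binom{n-k}{k}-\binom{n-2k}{k}-\binom nk=-\sum_{i=0}^{k-1}\sum_{j=0}^{k-1}\binom{n-2-i-j}{k-2},
\]
which equals $-4$ when $k=2$ and is $-\Theta(n^{k-2})$ when $k\ge 3$ — in every relevant case strictly below $-2$ for $n$ large. This contradiction shows $S=V(G)$, proving the theorem.

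The step I expect to be the main obstacle — and the one that genuinely consumes ``$n$ sufficiently large'' — is the double-counting bound that collapses $|S\cap X|$ and $|S\cap Y|$ to at most $2$: the point is that the near-regularity $\binom nk\approx\binom{n-k}{k}$ turns the innocuous cap $|N(v)\cap S|\le 2$ into the statement that $S$ meets each side in at most two vertices, after which the domination of the large part is impossible. Everything else is bookkeeping with binomial coefficients; the only care needed is to make the $o(1)$ estimates explicit (the argument runs as soon as $\binom nk<2\binom{n-k}{k}$, i.e. roughly $n\gtrsim k^2$, with $n\ge 3k$ also used for the final identity).
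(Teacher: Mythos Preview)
The paper does not contain a proof of this statement: Theorem~5.1 is quoted from Yang and Wu \cite{wu} and used as a black box in Section~5, so there is no ``paper's own proof'' to compare against.

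That said, your argument is correct and self-contained. The logic is clean: regularity with $d=\binom{n-k}{k}$, then the observation that a single vertex of $\mathcal A$ forces $|S\cap Y|\le\binom nk-d+2$, then the double count
\[
|S\cap X|\cdot d \;\le\; 2|\mathcal B| + |S\cap X|\cdot|S\cap Y|
\]
which, since $\binom nk/d\to 1$, collapses $|S\cap X|$ (and by symmetry $|S\cap Y|$) to at most $2$. The finishing inclusion--exclusion on $N(A_1)\cup N(A_2)$ and the telescoped identity
\[
2\binom{n-k}{k}-\binom{n-2k}{k}-\binom nk=-\sum_{i=0}^{k-1}\sum_{j=0}^{k-1}\binom{n-2-i-j}{k-2}
\]
are both correct, and the right-hand side is $-\Theta(n^{k-2})$ for $k\ge 3$, giving the contradiction. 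The edge cases $|S\cap X|\le 1$ are indeed weaker, as you note. Your quantitative remark that the whole thing runs once $\binom nk<2\binom{n-k}{k}$ (roughly $n\gtrsim k^2$) and $n\ge 3k$ is accurate. Since the paper offers no proof, I cannot say whether your route matches the original in \cite{wu}, but nothing in your argument is missing or wrong.
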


So, by Theorems 3.1 and 5.1, if $n$ is sufficiently large with
respect to any fixed $k$, then $\gamma_{t[1,2]}(G(n; k,
n-k))=+\infty$. Note that $G(n; k, n-k)$ is a $\binom
{n-k}{k}$-regular bipartite graph of order $2\binom {n}{k}$. So, we
ask the following problem.

\vspace{3mm}\noindent{\bf Question.} What is the smallest integer
$k$ such that there exists a $k$-regular graph $G$ with
$\gamma_{t[1,2]}(G)=+\infty$ ?

Chellali et al. \cite{che} asked that if $\gamma_{[1,2]}(G)<n $ for
any graph 5-regular graph $G$ ? If this is true, it is equivalent to
the following conjecture.

\vspace{3mm}\noindent{\bf Conjecture 3.} Every 5-regular graph $G$
has an induced subgraph $H$ with $3\leq d_H(v)\leq 4$.

\vspace{3mm} Note that a stronger assertion, that every 5-regular
graph $G$ has an 3-regular or 4-regular induced subgraph $H$,
generally does not hold. The formulation of Conjecture 3 reminds us
the well-known Berge-Saure conjecture \cite{A}, confirmed by
T\^{a}skinov \cite{T} and Zhang \cite{Z}, which says that every
4-regular graph contains a 3-regular subgraph.

\end{document}